\documentclass[twoside, reqno]{amsart}

\usepackage[left=2.5cm, right=2.5cm, top=3cm, bottom=2.5cm]{geometry}

\usepackage[colorlinks=true, pdfstartview=FitV, linkcolor=blue,citecolor=blue, urlcolor=blue]{hyperref}

\usepackage[usenames]{xcolor}
\definecolor{labelkey}{rgb}{0,0,1}
\definecolor{Red}{rgb}{0.7,0,0.1}
\definecolor{Green}{rgb}{0,0.7,0}

\usepackage{amsfonts, amssymb, amsmath, amsthm, mathrsfs, bbm, cjhebrew, gensymb, textcomp, mathtools, dsfont,tikz}

\usepackage[normalem]{ulem}

\usepackage{commath, setspace, subcaption, parcolumns, multirow, multicol, accents, comment, marginnote, verbatim, empheq, enumerate, stackrel, enumitem}



\usepackage[capitalize,nameinlink,noabbrev]{cleveref}
\usepackage{graphicx}
\usepackage{epsfig}
\usepackage{psfrag}
\usepackage{float}
\usepackage[active]{srcltx}
\usepackage[pagewise, mathlines]{lineno}

\newtheorem{theorem}{Theorem}[section]

\newtheorem{Thm}{Theorem}[section]
\newtheorem{Prop}{Proposition}[section]
\newtheorem{Lem}[theorem]{Lemma}
\newtheorem{Rmk}{Remark}[section]

\numberwithin{equation}{section}


\newcommand{\be}{\beta}
 
\newcommand{\De}{\Delta}
\newcommand{\eps}{\epsilon}

\newcommand{\kap}{\kappa}

\newcommand{\Lam}{\Lambda}
\newcommand{\si}{\sigma}
\newcommand{\xe}{\xi-\eta}

\newcommand{\tht}{\theta}

\newcommand{\om}{\omega}

\newcommand{\ze}{\zeta}

\newcommand{\lpj}{\triangle_j}

\newcommand{\ZZ}{\mathbb{Z}}
\newcommand{\RR}{\mathbb{R}}

\newcommand{\lb}{\big\langle}
\newcommand{\rb}{\big\rangle}

\newcommand{\goesto}{\rightarrow}

\newcommand{\Sob}[2]{\lVert#1\rVert_{#2}}
\newcommand{\nrm}[1]{\lVert#1\rVert}

\newcommand{\bdy}{\partial}

\newcommand{\Ae}{\abs{\eta}}
\newcommand{\Ax}{\abs{\xi}}

\newcommand{\Hdot}{\dot{H}}

\newcommand{\Acal}{\mathcal{A}}
\newcommand{\Bcal}{\mathcal{B}}

\DeclareMathOperator{\supp}{supp}

\DeclareMathOperator*{\Div}{div}
\title[On local well-posedness of log regularizations of generalized SQG in borderline spaces]{On local well-posedness of logarithmic regularizations of generalized SQG equations in borderline Sobolev spaces}
\author{Michael S. Jolly, Anuj Kumar, Vincent R. Martinez}
\thanks{The research of M.S.J. and A.K. was supported in part by the NSF grant DMS-1818754. The research of V.R.M. was supported in part by the PSC-CUNY grant 64335-00 52.}

\begin{document}

\maketitle
\begin{abstract}
	This paper studies a family of generalized surface quasi-geostrophic (SQG) equations for an active scalar $\theta$  on the whole plane  whose velocities have been mildly regularized, for instance, logarithmically. The well-posedness of these regularized models in borderline Sobolev regularity have previously been studied by D. Chae and J. Wu when the velocity $u$ is of lower singularity, i.e., $u=-\nabla^{\perp}\Lam^{\be-2}p(\Lam)\tht$, where $p$ is a logarithmic smoothing operator and $\be \in [0,1]$. We complete this study by considering the more singular regime $\be\in(1,2)$. The main tool is the identification of a suitable linearized system that preserves the underlying commutator structure for the original equation. We observe that this structure is ultimately crucial for obtaining continuity of the flow map. In particular, straightforward applications of previous methods for active transport equations fail to capture the more nuanced commutator structure of the equation in this more singular regime.  The proposed linearized system nontrivially modifies the flux of the original system in such a way that it coincides with the original flux when evaluated along solutions of the original system. The requisite estimates are developed for this modified linear system to ensure its  well-posedness. 
\end{abstract}

{\noindent \small {\it {\bf Keywords: surface quasi-geostrophic (SQG)
      equation, generalized SQG equation, borderline spaces, Hadamard well-posedness, existence, uniqueness, continuity with respect to initial data, logarithmic regularization}
  } \\
  {\it {\bf MSC 2020 Classifications:} 76B03, 35Q35, 35Q86, 35B45
     } }

\section{Introduction}
    In this paper, we study the initial value problem for a mildly regularized class of inviscid generalized surface quasi-geostrophic equations over the whole plane $\mathbb{R}^2$:
    \begin{align}\label{E:log-beta}
	\begin{cases}
	\partial_{t}\theta +u \cdot \nabla \theta=0,\\
	u=\nabla^{\perp}{\psi}:=(-\partial_{x_2}{\psi},\partial_{x_1}{\psi}) ,\quad \Delta {\psi}=\Lambda^{\beta}p(\Lam)\theta,\quad 0\le\beta<2,\\
	\tht(0,x)=\tht_{0}(x).
	\end{cases}
	\end{align}
	Here, $\theta$ represents the evolving scalar and $\psi$ its corresponding streamfunction. The operator $\Lam$ is the fractional laplacian operator, $(-\Delta)^{\frac{1}{2}}$ and $p(\Lam)$ is any Fourier multiplier operator that satisfies
	\[
	    \mathcal{F}(p(\Lam)f)(\xi)=p(|\xi|)\hat{f}(\xi).\]
    We will assume that $p(\cdot)$ is a radial function that satisfies the following conditions. 
	\begin{align}\label{def:p}
	    p(r)>0, \quad p\in L^{\infty}(\mathbb{R})\cap C^{\infty}(\mathbb{R}\backslash\{0\}),\quad \sup_{r>0}\frac{r|p'(r)|}{\sqrt{p(r)}}<\infty,\quad
	    \int_{1}^{\infty}\frac{p^{2}(r)}{r}dr<\infty.
	\end{align}
	In particular, note that 
	\begin{align}\label{E:log:multiplier}
	    p(|\xi|)=\ln(e+|\xi|^2)^{-\mu},\quad \mu>\frac{1}{2},
	\end{align}
	satisfies the above conditions. Consequently, \eqref{E:log-beta} contains the logarithmically regularized counterparts of the generalized SQG equation. In specific, the corresponding generalized SQG equation is given by
	\begin{align}\label{E:beta}
	\begin{cases}
	\partial_{t}\theta +u \cdot \nabla \theta=0,\\
	u=\nabla^{\perp}{\psi},\quad \Delta {\psi}=\Lambda^{\beta}\theta,\quad 0\le\beta<2.
	\end{cases}
	\end{align}

	For $\beta \in [0,1]$, \eqref{E:beta} interpolates between the 2D incompressible Euler vorticity equation ($\be=0$) and the SQG equation ($\beta=1$), while for $\be \in (1,2)$, it represents a family of active scalar equations with increasingly singular velocity. The Cauchy problem for \eqref{E:beta} was first considered in \cite{ChaeConstantinWu2012b}, where a regularity criterion in terms of the norm of $\tht$ in the H\"older space $C^{\be}$ was established. Since then, well-posedness in various functional spaces has been addressed \cite{ChaeConstantinCordobaGancedoWu2012,HuKukavicaZiane2015,LazarXue2019, JollyKumarMartinez2020a}, as well as continuity with respect to $\be$ \cite{YuZhenJiu2019}, global existence of weak solutions on bounded domains \cite{Nguyen2018}, the existence of invariant Gaussian measures for the flow \cite{NahmodPavlovicStaffilaniTotz2018}, and various studies on the corresponding point-vortex models \cite{FlandoliSaal2019, GeldhauserRomito2019, GeldhauserRomito2020,LuoSaal2020}.  One important aspect of the family \eqref{E:beta} is that it allows one to rigorously identify the Euler equation, $\be=0$, as a critical model. Indeed, from the point of view of the so-called `patch problem' posed in the half-space, global regularity holds at $\be=0$, whereas finite-time singularity can occur when $\be>0$ is sufficiently small \cite{KiselevRyzhikYaoZlatos2016}; the patch problem has also recently been studied in \cite{CordobaGomez-SerranoIonescu2019, GancedoPatel2021, KiselevYaoZlatos2017}. 
	
	The issue of local well-posedness of the regularized models, \eqref{E:log-beta} was initially studied by D. Chae and J. Wu in \cite{ChaeWu2012}. There, existence and uniqueness of solutions was established in the borderline Sobolev space, $H^{\be+1}$ when $\be\in[0,1]$. The term `borderline' refers to the threshold of regularity corresponding to solutions of \eqref{E:beta} with respect to which the gradient of the velocity in $L^\infty$ can be controlled; specifically, $\Sob{\tht}{H^{\si}}$ bounds $\Sob{\nabla u}{L^\infty}$, but only when $\si>\be+1$. Thus, the result in \cite{ChaeWu2012} essentially showed that this obstruction to a local theory at the critical level $\si=\be+1$ can be overcome by introducing a regularization of the form in \eqref{E:log-beta}, provided that $\be\in[0,1]$.
	Interestingly, they identify an additional threshold in the degree of this regularization for the local-well posedness to hold, namely $\mu>1/2$ in \eqref{E:log:multiplier}. This threshold was later shown to be sharp in the endpoint case, $\be=0$, represented by the logarithmically regularized 2D incompressible Euler equation by H. Kwon \cite{Kwon2020}; specifically, when $\be=0$, the corresponding system was shown to be strongly ill-posed in the borderline space $H^{1}(\mathbb{R}^2)\cap \dot{H}^{-1}(\mathbb{R}^2)$. The result of Kwon is an extension of the seminal paper of J. Bourgain and D. Li \cite{BourgainLi2015}, in which a longstanding open problem of whether the initial value problem to the $d$-dimensional Euler equation was well-posed or not in the scaling-critical topology (for the velocity) $H^{1+d/2}(\mathbb{R}^d)$, for $d=2,3$, was resolved by demonstrating the existence of a norm inflation phenomenon via ``large Lagrangian deformation" that yields non-continuity of the corresponding data-to-solution map. A notable alternative approach to demonstrating ill-posedness was established by T. Elgindi and I. Jeong \cite{ElgindiJeong2017} in the two-dimensional setting and by T. Elgindi and N. Masmoudi \cite{ElgindiMasmoudi2020} in higher dimensions. To the best of our knowledge, the issue of well-posedness of \eqref{E:beta} in the scaling-critical topology, $H^{\be+1}(\mathbb{R}^2)$ remains open. On the other hand, ill-posedness for active scalar equations with an even more singular constitutive law \cite{KukavicaVicolWang2016} as well as non-uniform continuity of the data-to-solution operator \cite{HimonasMisolek2010,MisiolekYoneda2012,MisiolekYoneda2018} have been studied. In this paper, the ``positive" side of this issue is treated as we establish the analogous results of D. Chae and J. Wu in the borderline Sobolev spaces for the more singular range $\be\in(1,2)$, thereby providing a complete picture of well-posedness for the full family of mildly regularized inviscid gSQG equations. In particular, we identify a similar threshold for well-posedness through the condition $\mu>1/2$ in \eqref{E:log:multiplier}.
	
	As previously mentioned, we  establish local well-posedness (in the sense of Hadamard) for the initial value problem \eqref{E:log-beta} in the range $\be \in (1,2)$ in the borderline Sobolev space $H^{\be+1}(\mathbb{R}^2)$. This result is stated in the following theorem.
	\begin{theorem}\label{T:theorem1}
	Let $\beta \in (1,2)$. For each $\tht_{0}\in H^{\be+1}(\mathbb{R}^2)$, there exists $T=T(\Sob{\tht_{0}}{H^{\be+1}})>0$ and a unique solution, $\tht$, of \eqref{E:log-beta} such that
	\[\tht \in C([0,T];H^{\be+1}(\mathbb{R}^2)).\]
	In particular, the data-to-solution map, $\Phi$, such that
	\begin{align}\label{def:flowmap}
	    \Phi:H^{\be+1}(\mathbb{R}^2)\goesto  \bigcup_{T>0}C([0,T];H^{\be+1}(\mathbb{R}^2)),\quad \tht_{0}\mapsto\tht(t;\tht_0),
	\end{align}
    is well-defined and continuous.
	\end{theorem}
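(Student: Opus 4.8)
The plan is to assemble Hadamard well-posedness from four pieces---an a priori estimate in $H^{\be+1}$, existence, uniqueness, and continuity of $\Phi$---reading off the lifespan $T$ from the first. The structural backbone throughout is that $u=\nabla^{\perp}\Lam^{\be-2}p(\Lam)\tht$ is divergence-free, so that $\nrm{\tht(t)}_{L^2}$ is conserved and the transport pairing $\langle u\cdot\nabla f,f\rangle$ vanishes identically in every homogeneous energy estimate.

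First I would prove the a priori bound. Applying $\Lam^{\be+1}$ to the first equation in \eqref{E:log-beta}, pairing with $\Lam^{\be+1}\tht$ in $L^2$, and using $\nabla\cdot u=0$ to kill the transport contribution reduces everything to the commutator $\langle[\Lam^{\be+1},u\cdot\nabla]\tht,\Lam^{\be+1}\tht\rangle$, whose target bound is $\nrm{\nabla u}_{L^\infty}\Sob{\tht}{H^{\be+1}}^2$. The point is that one must not spend $\nrm{u}_{\Hdot^{\be+1}}$, which for $\be>1$ costs $\be-1$ extra derivatives (formally $\nrm{u}_{\Hdot^{\be+1}}\sim\Sob{\tht}{\Hdot^{2\be}}$); I would organize the commutator by a Littlewood--Paley/paraproduct decomposition so that every derivative landing on $u$ is extracted as $\nabla u$ in $L^\infty$, the $\nabla^{\perp}$ structure supplying the cancellation that removes the dangerous high-frequency-on-$u$ interaction. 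The quantity $\nrm{\nabla u}_{L^\infty}$ is then controlled at the borderline precisely by the last condition in \eqref{def:p}: splitting $\widehat{\nabla u}$ at frequency one and applying Cauchy--Schwarz gives $\nrm{\nabla u}_{L^\infty}\lesssim\big(\int_1^\infty p^2(r)/r\,dr\big)^{1/2}\Sob{\tht}{\Hdot^{\be+1}}+\nrm{\tht}_{L^2}\lesssim\Sob{\tht}{H^{\be+1}}$. Hence $\frac{d}{dt}\Sob{\tht}{H^{\be+1}}\lesssim\Sob{\tht}{H^{\be+1}}^2$ and $T\gtrsim\Sob{\tht_0}{H^{\be+1}}^{-1}$.

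Existence I would obtain by a regularization-and-compactness scheme: mollify the velocity (or frequency-truncate/add a vanishing hyperviscosity), solve the regularized problems, propagate the uniform $H^{\be+1}$ bound above, and extract a weak-$*$ limit in $L^\infty([0,T];H^{\be+1})$; strong convergence in a lower-order norm via Aubin--Lions then lets one pass to the limit in $u\cdot\nabla\tht$. Strong time-continuity $\tht\in C([0,T];H^{\be+1})$ follows from weak continuity together with continuity of $t\mapsto\Sob{\tht(t)}{H^{\be+1}}$ read off from the energy balance.

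The crux---and the step I expect to resist---is uniqueness together with continuity of $\Phi$ in the strong topology. For two solutions, $w=\tht_1-\tht_2$ satisfies $\bdy_t w+u_1\cdot\nabla w+v\cdot\nabla\tht_2=0$ with $v=\nabla^{\perp}\Lam^{\be-2}p(\Lam)w$. Here the mechanism that saved the a priori estimate is lost: the dangerous term $v\cdot\nabla\tht_2$ is now a bare product rather than a commutator, so there is no $\nabla^{\perp}$ cancellation to absorb the $\be-1$ derivatives that $\Lam^{\be+1}$ spends on $v$ (producing $\Lam^{2\be}p(\Lam)w$), and the logarithmic smoothing alone cannot recover them. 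Following the strategy announced in the abstract, I would therefore not estimate the genuine difference equation directly, but replace it by a modified linear system for $w$ whose flux is re-engineered to keep this product inside a commutator---for instance using the pointwise identity $v\cdot\nabla\tht_2=-\nabla^{\perp}\tht_2\cdot\nabla\Lam^{\be-2}p(\Lam)w$ to rebalance the derivatives and then symmetrizing the resulting bilinear flux so that it reproduces the true flux difference $\tht_1 u_1-\tht_2 u_2$ along actual solutions while presenting, for the energy estimate, the same cancelling commutator structure as the original equation. Establishing well-posedness of this modified system in $H^{\be+1}$---with the very commutator machinery of the a priori step---controls $\Sob{w}{H^{\be+1}}$ by the data difference; taking equal data yields uniqueness, and a Bona--Smith-type regularization of the initial data upgrades the bound to continuity of $\Phi$ into $C([0,T];H^{\be+1})$. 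The delicate verification, and the true obstacle, is that the modification genuinely converts the lost derivatives into a commutator that the condition \eqref{def:p} can absorb, rather than merely relocating them.
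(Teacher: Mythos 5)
Your a priori estimate, existence scheme, and identification of the modified-flux idea all track the paper's strategy: the paper rewrites \eqref{E:log-beta} as $\partial_t\tht+\Div F_{-\tht}(\tht)=0$ with the flux \eqref{def:mod:flux}, whose second term $\Lam^{\be-2}p(\Lam)((\nabla^\perp\tht)q)$ contributes nothing under the divergence when $q=-\tht$ (this is exactly your antisymmetry identity $\nabla^\perp f\cdot\nabla g=-\nabla^\perp g\cdot\nabla f$), but whose presence converts the dangerous high-frequency-on-$u$ interactions into commutators such as $[\bdy_\ell^{\perp}A,\bdy_\ell\tht]$ and $[\Lam^{\si}_{j}A^{\frac{1}{2}},\bdy_\ell^{\perp}q]$, which are then handled by dedicated commutator lemmas; your bound on $\Sob{\nabla u}{L^\infty}$ via the integrability condition on $p$ is precisely \eqref{eq:borderline} and \eqref{est:L1:estimate}.

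The genuine gap is in your final step. You claim that well-posedness of the modified linear system ``controls $\Sob{w}{H^{\be+1}}$ by the data difference.'' It does not, and cannot: writing $w=\tht^{(1)}-\tht^{(2)}$, the difference solves $\partial_t w+\Div F_{-\tht^{(1)}}(w)=\Div F_{w}(\tht^{(2)})$, and the forcing is estimated by \cref{lem:nonlinearest} as $\Sob{\Div F_{w}(\tht^{(2)})}{H^{\be}}\leq C\Sob{w}{H^{\be}}\Sob{\tht^{(2)}}{H^{\be+1}}$ --- a full derivative is spent on $\tht^{(2)}$, so with solutions only in $H^{\be+1}$ one controls the difference only in $H^{\be}$ (this is \eqref{est:stability:Hbeta}, which gives uniqueness but nothing more). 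This one-derivative loss is intrinsic to quasilinear equations: the flow map is in general not Lipschitz, indeed not even uniformly continuous, in the strong topology (cf. the non-uniform continuity references in the paper's introduction), so no re-engineering of the flux can produce the Lipschitz-in-$H^{\be+1}$ bound you assert. This also makes your appeal to Bona--Smith internally inconsistent: if the claimed bound held, Bona--Smith would be superfluous; since it fails, a Bona--Smith argument would require precisely the ingredients you have not supplied, namely a shifted nonlinear estimate of the form $\Sob{\Div F_{w}(\tht^{\eps})}{H^{\be+1}}\leq C\Sob{w}{H^{\be}}\Sob{\tht^{\eps}}{H^{\be+2}}$ together with propagation of $H^{\be+2}$ regularity for mollified data at rate $\eps^{-1}$, so that the derivative lost on $\tht^{\eps}$ is repaid by the smallness of $w$ in the weaker norm. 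The paper avoids all of this with Kato's splitting applied to the \emph{differentiated} equation: $\bdy_\ell\tht^{n}$ itself solves a modified-flux linear equation with coefficient $-\tht^{n}$ and forcing $\Div F_{\bdy_\ell\tht^{n}}(\tht^{n})$; decomposing $\bdy_\ell\tht^{n}=\om^{n}_{\ell}+\ze^{n}_{\ell}$ into a part with fixed data and forcing and a part with vanishing data and forcing, the stability-in-parameters result (\cref{lem:stability}) gives $\om^{n}\to\om$ in $L^{\infty}_{T}H^{\be}$ and a Gronwall argument gives $\ze^{n}\to 0$, whence $\nabla\tht^{n}\to\nabla\tht$ in $L^{\infty}_{T}H^{\be}$ and therefore $\tht^{n}\to\tht$ in $L^{\infty}_{T}H^{\be+1}$ --- so only the weak-norm stability of the linear system is ever needed.
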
\par
    Observe that for $\tht \in H^{\be+1}(\mathbb{R}^2)$, one has
	    \begin{align}\label{eq:borderline}
        	\Sob{\nabla u}{L^{\infty}}\le C\Sob{\nabla \nabla^{\perp}\Lam^{\be-2}p(\Lam)\tht}{L^\infty}\le C\Sob{\tht}{H^{\be+1}}.
        \end{align}
    As a result, when $\be \in [0,1]$, one can use the standard estimates for the transport equation in the Sobolev space $H^{\be+1}(\mathbb{R}^2)$ to obtain existence, uniqueness and continuity of the flow map for \eqref{E:log-beta}. This approach, however, no longer seems to directly work when $\be \in (1,2)$ as the estimates for the transport equation in $H^{\be+1}$ instead require control of $\Sob{\nabla u}{H^{\be}}$ (see, for instance, \cite[Theorem 3.19]{BahouriCheminDanchinBook2011}).
	To overcome this difficulty, we observe that one must exploit the more nuanced commutator structure within  \eqref{E:log-beta}. Such a structure was originally identified in \cite{ChaeConstantinCordobaGancedoWu2012}, where it was exploited to demonstrate local well-posedness in $H^4(\mathbb{R}^2)$; this result was subsequently improved in \cite{HuKukavicaZiane2015} to the space $H^{\be+1+\eps}(\mathbb{R}^2)$, for $\eps>0$. We ultimately observe that the main obstruction to well-posedness for the models \eqref{E:log-beta} lies not in establishing existence and uniqueness, but rather in continuity of the data-to-solution map. Although the nuanced commutator structure in \eqref{E:log-beta} is crucial for establishing existence, exploiting it to demonstrate continuity is more delicate.
	This is ultimately done by identifying a suitable perturbation of the flux (see \eqref{def:mod:flux}) in \eqref{E:log-beta}, viewed in divergence-form, then developing the proper apriori estimates for linearizations of this perturbed system naturally pertaining to the argument for continuity (see \eqref{E:mod:claw}). These linearizations are ultimately obtained by adopting a classical splitting scheme that was introduced by Kato in \cite{Kato1975} to establish continuity of flow maps for quasilinear symmetric systems, but adapted for the system \eqref{E:log-beta} (see \eqref{E:omega} and \eqref{E:zeta}); we refer the reader to \cite{BahouriCheminDanchinBook2011} for details. The main ingredient for the proof of continuity is the stability estimate for equation \eqref{E:mod:claw} as stated in \cref{lem:stability}. The linear equation can then be viewed as a conservation law \eqref{E:mod:claw} whose flux incorporates the commutator structure of the original system. We remark that similar ideas were recently exploited by the authors in \cite{JollyKumarMartinez2020a} to address existence, uniqueness, and instantaneous smoothing of solutions to the ``supercritically" dissipative counterpart of \eqref{E:beta}, i.e., with $\Lam^\kap\tht$ for $0<\kap<1$, for large initial data belonging to the corresponding scaling-critical Sobolev spaces, $H^{\be+1-\kap}(\mathbb{R}^2)$.

	\begin{Rmk}
	By using a similar approach for the equation in \eqref{E:beta}, we can establish the local well-posedness (in the sense of Hadamard) of \eqref{E:beta} for $\be \in (1,2)$ in the smaller critical space $B^{\be+1}_{2,1}(\mathbb{R}^2)$. It would also be interesting to establish these results in the more general setting of $L^p$-based Besov spaces in the spirit of \cite{Vishik1998}. We refer the reader to the thesis of the second author for additional details (\cite{KumarThesis2021}).
	\end{Rmk}

	\section{Mathematical Preliminaries}\label{section:notation:preliminaries}
	For the rest of the paper, we assume that $C$ denotes a positive constant whose value may change from step to step. Dependence on other parameters may be specified when relevant. Moreover, all single integrals will occur over $\mathbb{R}^2$, unless otherwise specified.
	
	Let $\mathscr{S}(\RR^2)$ denote the space of Schwartz class functions defined on $\RR^2$ and $\mathscr{S}'(\RR^2)$ denote the space of tempered distributions. For $f\in\mathscr{S}'(\RR^2)$, we denote by $\hat{f}$ or $\mathcal{F}(f)$, the Fourier transform of $f$, defined as
	\[\hat{f}(\xi):=\int e^{-2\pi i x\cdot \xi}f(x)dx.\] Recall that $\mathcal{F}$ is an isometry on $L^2$ and satisfies
	    \begin{align}\notag
	        \lb f,g\rb=\lb \hat{f},\hat{g}\rb.
	    \end{align}
	We denote by $\Lam^\si$, $\si\in\RR$, the fractional laplacian operator, defined as
	    \begin{align}\notag
	        \mathcal{F}(\Lam^\si f)(\xi)=|\xi|^\si\mathcal{F}(f).
	    \end{align} 
    We recall the definition of the Fourier-based homogeneous and inhomogeneous Sobolev spaces on $\mathbb{R}^{2}$. For $\si \in \mathbb{R}$, we have
	    \begin{align}
	        &\Hdot^\si(\mathbb{R}^2):=\left\{f\in \mathscr{S}(\RR^2):\hat{f}\in L^2_{loc}, \Sob{f}{\Hdot^\si}:=\Sob{\Lam^\si f}{L^2}<\infty\right\},\label{def:hom:Sob:norm}\\
	        &H^\si(\mathbb{R}^2):=\left\{f\in \mathscr{S}(\RR^2):\hat{f}\in L^2_{loc}, \Sob{f}{H^\si}:=\Sob{(I-\De)^{\si/2}) f}{L^2}<\infty\right\}.\label{def:inhom:Sob:norm}
	    \end{align}

Hereafter, we will suppress the expression of the domain $\mathbb{R}^2$ when denoting the Schwartz, Sobolev, or related spaces, except when we would like to emphasize the dimensionality in the statement.
	
We now provide a brief review of the Littlewood-Paley decomposition and refer the reader to \cite{BahouriCheminDanchinBook2011, Chemin1998} for additional details. We define
	\begin{align*}
	\mathscr{Q}(\RR^2):=\left\{f\in \mathscr{S}(\RR^2): \int f(x)x^{\tau}\, dx=0, \quad \abs{\tau}=0,1,2,\cdots \right\}.
	\end{align*}
	Let $\mathscr{Q}(\RR^2)'$ denote the topological dual of $\mathscr{Q}(\RR^2)$. Then, $\mathscr{Q}(\RR^2)'$ can be identified with the space of tempered distributions modulo the vector space of polynomials on $\mathbb{R}^2$, denoted by $\mathscr{P}$, i.e.
	\begin{align*}
	\mathscr{Q}'(\RR^2)\cong\mathscr{S}(\RR^2)/\mathscr{P}.
	\end{align*}
	We will denote by ${\Bcal}(r)$, the open ball of radius $r$ with center at the origin and by ${\Acal}(r_{1},r_{2})$, the open annulus with inner and outer radii $r_{1}$ and $r_{2}$, and with center at the origin. It can be shown that there exist two non-negative radial functions $\chi,\phi\in\mathscr{S}(\RR^2)$ with $\supp\chi\subset{\Bcal}(1)$ and $\supp\phi\subset{\Acal}(2^{-1},2)$ such that for $\phi_j(\xi):=\phi(2^{-j}\xi)$ and $\chi_j(\xi):=\chi(2^{-j}\xi)$, following conditions are satisfied
	\begin{align*}
	    \begin{cases}
	    \sum_{j\in\ZZ}\phi_j(\xi)=1,\\
	    \chi+\sum_{j\geq0}\phi_j\equiv 1,\,\forall \xi\in\RR^2\setminus\{\mathbf{0}\},\\
	    \supp\phi_i\cap\supp\phi_j=\varnothing,\,\text{if}\,
	    |i-j|\geq2,\\
	    \text{and}\quad\supp\phi_i\cap\supp\chi =\varnothing.
	    \end{cases}
	\end{align*}
	We will fix the following notation
        \begin{align}\notag
            {\Acal}_{j}= {\Acal}(2^{j-1},2^{j+1}),\quad{\Acal}_{\ell,k}= {\Acal}(2^{\ell},2^{k}),\quad  {\Bcal}_j={\Bcal}(2^j).
        \end{align}
    With this notation, note that
	    \begin{align}\label{eq:rewrite:supp}
	        \supp\phi_j\subset{\Acal}_j,\quad \supp\chi_j\subset{\Bcal}_j.
	    \end{align}
	Denote by ${\lpj}$ and $S_{j}$, the (homogeneous) Littlewood-Paley dyadic blocks which are defined via Fourier transform as
	\begin{align}\notag
	\mathcal{F}({\lpj}f)=\phi_{j}\mathcal{F}(f), \quad \mathcal{F}(S_{j}f)=\chi_{j}\mathcal{F}(f). 
	\end{align}
    Observe that owing to \eqref{eq:rewrite:supp}, we obtain
	\begin{align*}
	&\mathcal{F}({\lpj}f)|_{{\Acal}_j^c}=0,\quad
	\mathcal{F}(S_{j}f)|_{{\Bcal}_j^{c}}=0,
	\end{align*}
    Also observe that for any $f \in \mathscr{S}(\RR^2)$, we have
    \begin{align*}
         f&=S_if+\sum_{j\geq i}\lpj f,\quad i\in\ZZ.
    \end{align*}
	On the other hand, when $f\in\mathscr{Q}(\RR^2)'$, we have
	    \begin{align*}
	        f&=\sum_{j\in\ZZ}\lpj f.
	    \end{align*}
Recall that the Besov seminorm $\norm{\cdotp}_{\dot{B}^{\si}_{2,2}}$, is defined in terms of the Littlewood-Paley dyadic blocks as
	    \begin{align*}
	        \nrm{f}_{\dot{B}^{\si}_{2,2}}:=\left(\sum_{j\in \mathbb{Z}}\left(2^{j\si}\nrm{\lpj f}_{L^2}\right)^{2}\right)^{\frac{1}{2}}.
	    \end{align*}
	In particular, we have the following characterization of the Sobolev seminorms
	    \begin{align*}
	       C^{-1}\Sob{f}{\dot{B}^\sigma_{2,2}(\mathbb{R}^2)}\leq \Sob{f}{\dot{H}^\si(\mathbb{R}^2)}\leq C\Sob{f}{\dot{B}^\sigma_{2,2}(\mathbb{R}^2)},
	    \end{align*}
	for some constant $C$ depending only on $\si$. We will frequently employ the following well known inequality (see  \cite{BahouriCheminDanchinBook2011},\cite{Chemin1998}) which quantifies the relation between the dyadic blocks and the fractional laplacian operator.
	\begin{Lem}[Bernstein inequalities]\label{T:Bernstein}
		Let $\si\in\RR$ and $1\le p \le q\le \infty$. Then
		\begin{align*}
	C^{-1}2^{\si j}\nrm{{\lpj}f}_{L^q(\mathbb{R}^2)}\le \nrm{\Lam^{\si}{\lpj}f}_{L^q(\mathbb{R}^2)}\le C 2^{\si j+2j(\frac{1}{p}-\frac{1}{q})}\nrm{{\lpj}f}_{L^p(\mathbb{R}^2)},
		\end{align*}
	 where $C>0$ is a constant that depends on $p,q$ and $\si$.
	\end{Lem}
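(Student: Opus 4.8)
The plan is to reduce both inequalities to Young's convolution inequality by exploiting the annular localization of the Fourier support of $\lpj f$ together with the exact scaling of the symbol $|\xi|^\si$ under dilation. First I would introduce a ``fattened'' cutoff: a radial $\til{\phi}\in\mathscr{S}(\RR^2)$ with $\supp\til{\phi}\subset\Acal(2^{-2},2^{2})$ and $\til{\phi}\equiv 1$ on $\supp\phi\subset\Acal(2^{-1},2)$, and set $\til{\phi}_j:=\til{\phi}(2^{-j}\cdot)$ with associated projection $\tlpj$ defined by $\Ft(\tlpj f)=\til{\phi}_j\Ft(f)$. Since $\Ft(\lpj f)=\phi_j\Ft(f)$ is supported where $\til{\phi}_j\equiv 1$, one has the reproducing identity $\lpj f=\tlpj\lpj f$, and likewise $\Lam^\si\lpj f=\tlpj\Lam^\si\lpj f$, because $\Lam^\si$ does not enlarge Fourier support. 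All of the scale-dependence will be pushed into these dilations.

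For the lower bound I would write $\lpj f=\Lam^{-\si}\tlpj(\Lam^\si\lpj f)$ and note that $\Lam^{-\si}\tlpj$ is the Fourier multiplier with symbol $|\xi|^{-\si}\til{\phi}_j(\xi)=2^{-\si j}g(2^{-j}\xi)$, where $g(\ze):=|\ze|^{-\si}\til{\phi}(\ze)$. The crucial point is that $g$ is smooth and compactly supported, with support in an annulus bounded away from the origin, so that the singularity of $|\ze|^{-\si}$ is excised; hence $\check{g}\in L^1$ with $\nrm{\check{g}}_{L^1}$ finite and independent of $j$. A change of variables shows that the convolution kernel of $\Lam^{-\si}\tlpj$ has $L^1$-norm equal to $2^{-\si j}\nrm{\check{g}}_{L^1}$, so Young's inequality yields $\nrm{\lpj f}_{L^q}\le 2^{-\si j}\nrm{\check{g}}_{L^1}\nrm{\Lam^\si\lpj f}_{L^q}$, which is the left inequality for all $q\in[1,\infty]$.

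For the upper bound I would apply the same idea to $\Lam^\si\lpj f=(\Lam^\si\tlpj)\lpj f$, now with symbol $|\xi|^\si\til{\phi}_j(\xi)=2^{\si j}h(2^{-j}\xi)$, where $h(\ze):=|\ze|^\si\til{\phi}(\ze)$ is again smooth and compactly supported, so $\check{h}$ is Schwartz and $\nrm{\check{h}}_{L^r}<\infty$ for every $r$. Choosing $r$ through the Young relation $1+\tfrac1q=\tfrac1p+\tfrac1r$ (which forces $r\ge 1$ precisely because $p\le q$), a scaling computation gives that the kernel of $\Lam^\si\tlpj$ has $L^r$-norm equal to $2^{\si j}2^{2j(1/p-1/q)}\nrm{\check{h}}_{L^r}$, using $1-\tfrac1r=\tfrac1p-\tfrac1q$. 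Young's inequality then gives $\nrm{\Lam^\si\lpj f}_{L^q}\le 2^{\si j+2j(1/p-1/q)}\nrm{\check{h}}_{L^r}\nrm{\lpj f}_{L^p}$, as claimed.

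The routine part is the scaling bookkeeping that extracts the powers of $2^j$ from the kernels; the only point requiring genuine care is verifying that the normalized symbols $g$ and $h$ are smooth and compactly supported for \emph{every} real $\si$ --- negative, zero, or positive --- so that their inverse transforms lie in the relevant Lebesgue spaces with constants depending only on $\si$ and $r$. This $j$-uniformity, which rests entirely on the scale invariance of the construction combined with the reproducing identity $\lpj=\tlpj\lpj$, is exactly what makes the constant $C$ depend only on $p,q,\si$ as asserted.
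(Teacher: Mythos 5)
Your proposal is correct: the paper states this lemma without proof, citing the standard references (Bahouri--Chemin--Danchin and Chemin), and your argument --- the fattened annular cutoff $\tlpj$ with the reproducing identity $\lpj=\tlpj\lpj$, the observation that $|\ze|^{\pm\si}\til{\phi}(\ze)$ is smooth and compactly supported away from the origin for every real $\si$, the dilation bookkeeping for the kernels, and Young's convolution inequality with $1+\tfrac1q=\tfrac1p+\tfrac1r$ --- is precisely the classical proof found in those references. Nothing further is needed.
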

Let us recall the following classical product estimate in homogenous Sobolev spaces (\cite{RunstSickel1996}).
\begin{Lem}\label{T:Sobolev}
Suppose that $s,t<1$ and $s+t>0$. Let $f\in \Hdot^{s}(\mathbb{R}^2)$ and $g\in \Hdot^{t}(\mathbb{R}^2)$. Then
\begin{align*}
    \Sob{fg}{\Hdot^{s+t-1}}\le C\Sob{f}{\Hdot^s}\Sob{g}{\Hdot^t}.
\end{align*}
\end{Lem}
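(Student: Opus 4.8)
The plan is to prove this classical product law by Bony's paraproduct decomposition, splitting $fg$ into two paraproducts and a remainder and tracking precisely which hypothesis controls each piece: $s<1$ for the first paraproduct, $t<1$ for the second, and $s+t>0$ for the remainder. Writing $S_{j-1}h=\sum_{k\le j-2}\lpk h$, I decompose
\[
fg=T_fg+T_gf+R(f,g),\qquad T_fg:=\sum_{j\in\ZZ}(S_{j-1}f)\,\lpj g,\quad R(f,g):=\sum_{|j-j'|\le 1}(\lpj f)(\triangle_{j'}g).
\]
Throughout I set $a_j:=2^{js}\nrm{\lpj f}_{L^2}$ and $b_j:=2^{jt}\nrm{\lpj g}_{L^2}$, so that $(a_j),(b_j)\in \ell^2(\ZZ)$ with $\nrm{(a_j)}_{\ell^2}\simeq\Sob{f}{\Hdot^s}$ and $\nrm{(b_j)}_{\ell^2}\simeq\Sob{g}{\Hdot^t}$, by the Besov characterization of $\Hdot^\si$ recalled above. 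The goal is to bound the $\dot{B}^{s+t-1}_{2,2}$-seminorm of each of the three pieces by $C\Sob{f}{\Hdot^s}\Sob{g}{\Hdot^t}$.

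For the paraproduct $T_fg$, the standard support properties (see \cite{BahouriCheminDanchinBook2011}) localize each summand $(S_{j-1}f)\lpj g$ in an annulus $\simeq 2^j$, so $\Sob{T_fg}{\dot{B}^{s+t-1}_{2,2}}^2\simeq\sum_j 2^{2j(s+t-1)}\nrm{(S_{j-1}f)\lpj g}_{L^2}^2$. Estimating $\nrm{(S_{j-1}f)\lpj g}_{L^2}\le\nrm{S_{j-1}f}_{L^\infty}\nrm{\lpj g}_{L^2}$ and invoking the Bernstein inequality (\cref{T:Bernstein}) in the form $\nrm{\lpk f}_{L^\infty}\le C2^k\nrm{\lpk f}_{L^2}=C2^{k(1-s)}a_k$, I obtain $\nrm{S_{j-1}f}_{L^\infty}\le C\sum_{k\le j-2}2^{k(1-s)}a_k$. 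Since $s<1$, the kernel $2^{k(1-s)}\mathbf{1}_{k\le-2}$ lies in $\ell^1$, so Young's convolution inequality yields $\nrm{S_{j-1}f}_{L^\infty}\le C2^{j(1-s)}d_j$ with $(d_j)\in\ell^2$ and $\nrm{(d_j)}_{\ell^2}\le C\Sob{f}{\Hdot^s}$; here the hypothesis $s<1$ is exactly what makes the low-frequency sum converge. Combining gives $2^{j(s+t-1)}\nrm{(S_{j-1}f)\lpj g}_{L^2}\le Cd_jb_j$, and since $\sum_j(d_jb_j)^2\le\nrm{(d_j)}_{\ell^2}^2\nrm{(b_j)}_{\ell^2}^2$ we get $\Sob{T_fg}{\dot{B}^{s+t-1}_{2,2}}\le C\Sob{f}{\Hdot^s}\Sob{g}{\Hdot^t}$. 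The term $T_gf$ is controlled identically after exchanging $(f,s)\leftrightarrow(g,t)$, now using $t<1$.

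The remainder is the crux, and is where $s+t>0$ enters; this is the step I expect to be the main obstacle. Setting $R_j:=\sum_{|j'-j|\le1}(\lpj f)(\triangle_{j'}g)$, each $R_j$ has Fourier support in a \emph{ball} $\Bcal(C2^j)$ rather than an annulus, so that $\lpk R=\sum_{j\ge k-N}\lpk R_j$ for a fixed $N$. One cannot bound $\nrm{\lpk R_j}_{L^2}$ by $\nrm{R_j}_{L^2}$, since that discards the low-frequency localization gain and produces a divergent series precisely when $s+t\le 1$. Instead I apply the Bernstein inequality (\cref{T:Bernstein}) from $L^1$ to $L^2$, namely $\nrm{\lpk R_j}_{L^2}\le C2^k\nrm{R_j}_{L^1}\le C2^k\sum_{|j'-j|\le1}\nrm{\lpj f}_{L^2}\nrm{\triangle_{j'}g}_{L^2}\le C2^k2^{-j(s+t)}a_jB_j$, where $B_j:=\sum_{|j'-j|\le1}b_{j'}$ defines another $\ell^2$ sequence. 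Hence $2^{k(s+t-1)}\nrm{\lpk R}_{L^2}\le C\sum_{j\ge k-N}2^{(k-j)(s+t)}a_jB_j$, which is the convolution of $(a_jB_j)\in\ell^1$ (by Cauchy--Schwarz) with the kernel $2^{m(s+t)}\mathbf{1}_{m\le N}$; this kernel belongs to $\ell^1$ \emph{exactly because} $s+t>0$. Young's inequality then gives $\Sob{R(f,g)}{\dot{B}^{s+t-1}_{2,2}}\le C\Sob{f}{\Hdot^s}\Sob{g}{\Hdot^t}$. Summing the three contributions and passing back to $\Hdot^{s+t-1}$ via the Besov characterization completes the proof.
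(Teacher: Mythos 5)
You should first note that the paper contains no proof of \cref{T:Sobolev} to compare against: it is invoked as a classical product estimate with a citation to Runst--Sickel \cite{RunstSickel1996}. Your argument is the standard self-contained proof of that classical fact (essentially the product law in \cite{BahouriCheminDanchinBook2011}): Bony decomposition, $L^2$--$L^\infty$ Bernstein for the two paraproducts, $L^1$--$L^2$ Bernstein for the remainder, and you attribute each hypothesis ($s<1$, $t<1$, $s+t>0$) to exactly the piece of the decomposition it controls, which is the heart of the matter. One detail needs fixing, though. With the paper's convention $\supp\phi\subset\Acal(2^{-1},2)$, the summand $(S_{j-1}f)\lpj g$ with $S_{j-1}=\sum_{k\le j-2}\lpk$ is \emph{not} supported in an annulus of size $\simeq 2^j$: the Minkowski sum of the ball $\Bcal(2^{j-1})$ and the annulus $\Acal_j$ reaches arbitrarily close to the origin, so the lower bound of the support degenerates. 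This matters precisely here because your target regularity $s+t-1$ may be negative, and ball localization alone would force you to sum $2^{(k-j)(s+t-1)}$ over $j\ge k-N$, which diverges --- the same divergence you correctly identified and avoided in the remainder term. The fix is cosmetic: either adopt the BCD choice of $\phi$ (under which your support claim is literally true, and which your citation suggests you had in mind), or define the paraproduct as $T_fg=\sum_j(S_{j-2}f)\lpj g$ so that the two spectra are genuinely separated, absorbing the extra diagonal terms $|j-j'|\le 2$ into the remainder, which your remainder argument handles verbatim. With that one adjustment the proof is complete and correct.
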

We will also use a particular dualized version of the above inequality as stated in \cite{JollyKumarMartinez2020a}.
\begin{Lem}\label{lem:commutator1}
		For $\si \in (-1,1)$ and $f,g,h \in \mathscr{S}(\mathbb{R}^2)$, define
		\begin{align*}
		\mathcal{L}_{\si}(f,g,h)&:=\iint\Ax^{\si}\hat{f}(\xi-\eta){ \hat{g}(\eta)\overline{\hat{h}(\xi)}} d\eta d\xi.
		\end{align*}
		Suppose that $\supp \hat{h}\subset\Acal_j$, for some $j\in\mathbb{Z}$. Then for each $\si\in(-1,1)$ and $\epsilon \in (0,2)$ such that $\sigma>\epsilon-1$, there exists a constant $C>0$, depending only on $\si,\epsilon$, and $\{c_j\}\in\ell^2(\mathbb{Z})$ with $\Sob{\{c_j\}}{\ell^2}\leq1$ such that
		\begin{align}\notag
		    	|\mathcal{L}_{\si}(f,g,h)|\le  Cc_j2^{\epsilon j}\min\left\{\nrm{f}_{\Hdot^{1-\epsilon}}\nrm{g}_{\Hdot^{\si}},\nrm{g}_{\Hdot^{1-\epsilon}}\nrm{f}_{\Hdot^{\si}}\right\}\nrm{h}_{L^{2}}.
		\end{align}
	\end{Lem}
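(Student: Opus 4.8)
The plan is to recognize $\mathcal{L}_\si$ as a frequency-localized fractional-derivative pairing of the product $fg$ against $h$, and then to invoke the fractional product estimate of \cref{T:Sobolev} together with the Besov characterization of $\Hdot^\si$. First I would carry out the $\eta$-integration: since the inner integral $\int \widehat{f}(\xi-\eta)\widehat{g}(\eta)\,d\eta = \widehat{fg}(\xi)$, Plancherel yields $\mathcal{L}_\si(f,g,h)=\int \Ax^{\si}\widehat{fg}(\xi)\overline{\widehat{h}(\xi)}\,d\xi = \lb \Lam^\si(fg),h\rb$. The essential gain comes from the hypothesis $\supp\widehat{h}\subset\Acal_j$: the integrand is then supported in the annulus $\Acal_j$, where $\Ax^\si\approx 2^{\si j}$. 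Applying Cauchy--Schwarz in $\xi$ and using $\nrm{\widehat h}_{L^2}=\nrm{h}_{L^2}$ gives $|\mathcal{L}_\si(f,g,h)|\le C 2^{\si j}\big(\int_{\Acal_j}|\widehat{fg}|^2\,d\xi\big)^{1/2}\nrm{h}_{L^2}$. Because $\Acal_j$ meets only the supports of $\phi_{j-1},\phi_j,\phi_{j+1}$, the restricted $L^2$ mass of $\widehat{fg}$ is controlled by $\sum_{|k-j|\le1}\nrm{\lpk(fg)}_{L^2}^2$, so it suffices to estimate the dyadic blocks of the product.

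Next I would extract the block estimate from a single product bound. The key arithmetic is that choosing exponents $s=1-\eps$ and $t=\si$ gives $s+t-1=\si-\eps$, and the constraints of \cref{T:Sobolev}, namely $s,t<1$ and $s+t>0$, translate \emph{exactly} into $\eps>0$, $\si<1$, and $\si>\eps-1$---precisely the standing hypotheses. Hence $\Sob{fg}{\Hdot^{\si-\eps}}\le C\Sob{f}{\Hdot^{1-\eps}}\Sob{g}{\Hdot^{\si}}$, and by the symmetry $fg=gf$ (swapping the roles of the two factors in \cref{T:Sobolev}) one also obtains $\Sob{fg}{\Hdot^{\si-\eps}}\le C\Sob{g}{\Hdot^{1-\eps}}\Sob{f}{\Hdot^{\si}}$; together these produce the minimum in the conclusion. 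The Besov characterization of $\Hdot^{\si-\eps}$ then furnishes a sequence $b_k:=2^{k(\si-\eps)}\nrm{\lpk(fg)}_{L^2}$ with $\nrm{\{b_k\}}_{\ell^2}\le C\Sob{fg}{\Hdot^{\si-\eps}}$, whence $\nrm{\lpk(fg)}_{L^2}=2^{(\eps-\si)k}b_k$. Since $2^{(\eps-\si)k}\approx 2^{(\eps-\si)j}$ for $|k-j|\le1$, summing over this finite neighborhood and setting $c_j:=\big(\sum_{|k-j|\le1}b_k^2\big)^{1/2}/\nrm{\{b_k\}}_{\ell^2}$ (which has $\nrm{\{c_j\}}_{\ell^2}\le C$, hence may be renormalized to norm $\le 1$) gives $\big(\int_{\Acal_j}|\widehat{fg}|^2\big)^{1/2}\le C c_j 2^{(\eps-\si)j}\Sob{fg}{\Hdot^{\si-\eps}}$. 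Combining with the prefactor $2^{\si j}$ collapses the powers of two, since $2^{\si j}\cdot 2^{(\eps-\si)j}=2^{\eps j}$, and yields the claim.

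The argument is largely bookkeeping, so the point I would check most carefully---the only real subtlety---is the exact correspondence between the admissibility conditions of \cref{T:Sobolev} and the hypotheses $\si\in(-1,1)$, $\eps\in(0,2)$, $\si>\eps-1$; in particular that $s=1-\eps$ is permitted to be negative (as happens when $\eps>1$), which is allowed because \cref{T:Sobolev} imposes only $s<1$ and $s+t>0$, and that the frequency localization of $\widehat h$ is precisely what lets one pay only $2^{\si j}$ rather than demanding $\si$ derivatives of the full product. The $\min$ structure is then a free consequence of the symmetry of the product.
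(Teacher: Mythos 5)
Your argument is correct, and every step checks out: the identity $\mathcal{L}_{\si}(f,g,h)=\lb \Lam^{\si}(fg),h\rb$ follows from $\hat{f}*\hat{g}=\widehat{fg}$ and Plancherel; on $\Acal_j$ one indeed has $\Ax^{\si}\leq 2^{|\si|}2^{\si j}$ regardless of the sign of $\si$; the annulus $\Acal_j$ meets only $\supp\phi_k$ for $|k-j|\leq 1$, so the localized mass of $\widehat{fg}$ is controlled by three dyadic blocks; and the choice $s=1-\eps$, $t=\si$ in \cref{T:Sobolev} is admissible precisely under the stated hypotheses $\eps>0$, $\si<1$, $\si>\eps-1$ (including the regime $\eps>1$ where $s<0$), with the $\min$ coming for free since both product bounds control the same quantity $\Sob{fg}{\Hdot^{\si-\eps}}$. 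Note, however, that the paper does not prove \cref{lem:commutator1} at all; it quotes it from \cite{JollyKumarMartinez2020a}, so there is no in-paper proof to compare against. Judging from how the paper proves its other commutator estimates (\cref{lem:commutator2,lem:commutator3}), the authors' style is to work directly on the Fourier side: estimate the symbol pointwise, split the $\eta$-integration region, and apply Cauchy--Schwarz, H\"older, and Young's convolution inequality by hand. Your route is genuinely different and arguably cleaner: by recognizing the trilinear form as a frequency-localized pairing against the product $fg$, you reduce the whole lemma to two black boxes already stated in the paper, namely the product law \cref{T:Sobolev} and the $\dot{B}^{\si}_{2,2}$--$\dot{H}^{\si}$ equivalence, with the sequence $c_j$ emerging naturally from the Besov characterization (your renormalization of $\nrm{\{c_j\}}_{\ell^2}=\sqrt{3}$ into the constant is fine). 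What the direct symbol-estimate approach buys is flexibility---it generalizes to the genuinely noncommutative multipliers $m_{s,\ell}(\xi,\eta)$ of \cref{lem:commutator2,lem:commutator4}, where no reduction to a product of two functions is available---whereas your reduction exploits the special feature of $\mathcal{L}_{\si}$ that the multiplier $\Ax^{\si}$ depends on the output frequency alone, which is exactly why it is shorter here.
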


\section{Commutator estimates}
In this section, we will establish the commutator estimates that will be required in order to prove \cref{T:theorem1}. First, we prove  \cref{lem:commutator2}, which establishes an estimate for the localized commutator of the operator $\nabla^{\perp}\Lam^{\beta-2}p(\Lam)$ that defines the velocity $u$ in terms of the scalar $\theta$ in \eqref{E:log-beta}, where we recall $p$ is a multiplier satisfying \eqref{def:p}. To this end, we consider a multiplier $P(D)$ such that
    \begin{align}\label{E:p:conditions}
     \sup_{\xi}|P(\xi)|,\quad \sup_{\xi}|\xi||\nabla P(\xi)|<\infty.
    \end{align}
    
    We will denote the commutator of two operators, $S$ and $T$, by $[S,T]$, where
    \[[S,T]:=ST-TS.\]
\begin{Lem}\label{lem:commutator2}
Let $s\in(0,1)$, $\eps \in [0,1)$ be such that $\eps+s\le 1$. Let $f\in \Hdot^{\eps}(\mathbb{R}^2)$, $g\in \Hdot^{2-s-\eps}(\mathbb{R}^2)$ and $h\in L^{2}(\mathbb{R}^2)$. Let $P$ be any Fourier multiplier satisfying \eqref{E:p:conditions}. Given $k>0$, suppose that $\supp \hat{f}\subset\Acal_i$ and $\supp \hat{h}\subset\Acal_j$, where $\abs{i-j}\le k$. There exists a constant $C>0$, depending only on $s,k,\eps$, such that
    \begin{align}
       |\langle [\Lam^{-s}P(D)\bdy_\ell,g]f,h\rangle|\leq C\Sob{g}{\dot{H}^{2-s-\eps}}\Sob{ f}{\Hdot^\eps}\Sob{h}{L^2},\quad \ell=1,2.\notag
    \end{align}
\end{Lem}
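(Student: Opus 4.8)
The plan is to pass to the Fourier side and recognize the commutator pairing as an instance of the trilinear form $\mathcal{L}_\sigma$ from \cref{lem:commutator1}, after extracting a factor that captures the smoothing gained by the commutator structure. Writing $m(\xi) = |\xi|^{-s}P(\xi)(i\xi_\ell)$ for the full symbol of $\Lam^{-s}P(D)\bdy_\ell$, I would express
\begin{align}\notag
\langle [\Lam^{-s}P(D)\bdy_\ell, g]f, h\rangle = \iint \big(m(\xi) - m(\eta)\big)\, \hat{g}(\xi-\eta)\,\hat{f}(\eta)\,\overline{\hat{h}(\xi)}\, d\eta\, d\xi.
\end{align}
The essential point is to estimate the symbol difference $m(\xi)-m(\eta)$. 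Since $m$ is homogeneous of degree $1-s$ away from the origin (modulo the bounded, mildly-oscillating factor $P$), I expect a bound of the shape $|m(\xi)-m(\eta)| \le C\,|\xi-\eta|\,(|\xi|+|\eta|)^{-s}\,\sup_r(|P|+|r||\nabla P|)$ in the relevant frequency regime, exploiting \eqref{E:p:conditions}. The support hypotheses $\supp\hat{f}\subset\Acal_i$, $\supp\hat{h}\subset\Acal_j$ with $|i-j|\le k$ pin $|\xi|$ and $|\eta|$ to comparable dyadic scales $\sim 2^j$, which lets me replace $(|\xi|+|\eta|)^{-s}$ by $C2^{-sj}$ uniformly on the effective domain of integration.

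Having extracted the gradient factor, I would absorb the $|\xi-\eta|$ into $\hat{g}$ and the scale factor $2^{-sj}$ into a power of $2^j$ attached to $\hat{h}$, thereby rewriting the pairing (up to the harmless $P$ factor, handled by decomposing $P$ or treating it as a bounded Fourier multiplier applied to the appropriate argument) as a constant multiple of $\mathcal{L}_{\sigma}(g', f, h)$ for a suitable $\sigma$, where $g'$ carries one extra derivative relative to $g$. Concretely, I want to arrange the homogeneities so that \cref{lem:commutator1} applies with the choice $\sigma = \eps$ and $\epsilon = s+\eps$ (or the symmetric assignment, depending on which of $f,g$ receives the low-regularity slot), whose constraints $\sigma\in(-1,1)$, $\epsilon\in(0,2)$, $\sigma>\epsilon-1$ translate precisely into the hypotheses $s\in(0,1)$, $\eps\in[0,1)$, $\eps+s\le 1$. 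The factor $2^{\epsilon j}$ produced by \cref{lem:commutator1} should then cancel against the $2^{-sj}$ and the $2^{\eps j}$-type weights incurred when matching $\Hdot$-norms to the dyadic scale of $h$, via \cref{T:Bernstein}, leaving the clean bound with no surviving power of $2^j$.

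The main obstacle I anticipate is the bookkeeping of homogeneities so that the derivative gained from the commutator lands in exactly the right place to invoke \cref{lem:commutator1} with admissible parameters — in particular verifying that the borderline constraint $\eps+s\le 1$ is what guarantees $\sigma>\epsilon-1$, and ensuring the summability constant $\{c_j\}$ and the single-dyadic-block localization of $h$ (required by that lemma) are compatible with the two-sided localization $|i-j|\le k$ here. A secondary technical point is handling the multiplier $P$: because $P$ need not be radial or homogeneous, I cannot simply fold it into $\Lam$, so I would either keep it inside the symbol difference and control its contribution through the gradient bound $|\xi||\nabla P(\xi)|\le C$, or commute it past $g$ at the cost of another (lower-order) commutator that is easier to estimate. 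I expect the symbol-difference estimate, combined with careful tracking of which argument absorbs the extra derivative, to be the crux; once the pairing is cast as $\mathcal{L}_\sigma$, the conclusion follows directly from \cref{lem:commutator1}.
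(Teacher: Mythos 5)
Your endgame is sound and genuinely different from the paper's. Granting the uniform symbol bound $|m(\xi)-m(\eta)|\le C_k\,2^{-sj}|\xi-\eta|$ on the support (where $|\xi|\sim 2^j$, $|\eta|\sim 2^i$, $|i-j|\le k$), one inserts $1\le C2^{-\eps j}|\xi|^{\eps}$ on $\supp\hat{h}$, absorbs $|\xi-\eta|$ into $\Lam g$, and applies \cref{lem:commutator1} to $\mathcal{L}_{\eps}(\Lam g,f,h)$ with $\sigma=\eps$, $\epsilon=s+\eps$: the constraints $\sigma\in(-1,1)$, $\epsilon\in(0,2)$, $\sigma>\epsilon-1$ reduce exactly to $\eps<1$, $s>0$, $s<1$, so even the endpoint $\eps+s=1$ is admissible, and the powers $2^{-sj}\cdot 2^{-\eps j}\cdot 2^{(s+\eps)j}$ cancel, with the min's first option giving $\Sob{g}{\Hdot^{2-s-\eps}}\Sob{f}{\Hdot^{\eps}}\Sob{h}{L^2}$ on the nose. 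The paper does none of this: it never invokes \cref{lem:commutator1} in this proof, instead splitting the frequency of $g$ into a low region and a comparable region and closing each with Cauchy--Schwarz, Young, and H\"older with hand-picked exponents. Your route, once completed, is arguably cleaner and even yields an extra factor $c_j$.

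The genuine gap is in the step you yourself call the crux: the symbol-difference estimate. Homogeneity of degree $1-s$ plus the mean value theorem does \emph{not} yield $|m(\xi)-m(\eta)|\le C|\xi-\eta|(|\xi|+|\eta|)^{-s}$, because the straight segment joining $\eta$ to $\xi$ can pass arbitrarily close to the origin even when both frequencies are pinned to the scale $2^j$ (take $\xi\approx-\eta$); there $|\nabla m(\zeta)|\sim|\zeta|^{-s}$ blows up, so bounding by $\sup_{\mathrm{segment}}|\nabla m|$ fails precisely in this antipodal regime. The bound is nevertheless true, but it requires an additional idea that your sketch omits: either (a) a case split --- if $\xi\cdot\eta\ge 0$, then every point of the segment has modulus at least $\tfrac{1}{\sqrt{2}}\min(|\xi|,|\eta|)$ and the mean value argument goes through, while if $\xi\cdot\eta<0$, then $|\xi-\eta|\ge\max(|\xi|,|\eta|)$ and the crude triangle inequality $|m(\xi)|+|m(\eta)|\le C(|\xi|+|\eta|)^{1-s}$ already gives the claim --- or (b) the paper's device: split according to the size of the frequency of $g$, noting that when it is small the segment stays pointwise away from the origin, and when it is comparable one integrates the singularity along the segment via the convexity estimate \cref{lem:elem:convex}, $\int_0^1|\varphi+\tau\vartheta|^{-s}\,d\tau\le C$, which is available to you and which you never invoke. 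Without one of these, the derivation of your key bound breaks down; with it, your proposal completes.
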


The proof of \cref{lem:commutator2} is similar to that of Lemma 4.3 in \cite{JollyKumarMartinez2020a}. To prove it, we will make use of the following convexity-type inequality that is proved in \cref{sect:app}.
\begin{Lem}\label{lem:elem:convex}
Let $\varphi,\vartheta\in\mathbb{R}^d$, where $d\geq1$, such that $|\vartheta|=1$. For all $0<s<1$, there exists a constant $C>0$ depending only on $s$ such that
    \begin{align}\notag
    \int_0^1\frac{1}{|\varphi+\tau\vartheta|^s}d\tau\leq C.
    \end{align}
\end{Lem}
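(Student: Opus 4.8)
The plan is to prove the convexity-type inequality in \cref{lem:elem:convex} by splitting the integration interval $[0,1]$ according to whether the integrand is singular, and to exploit the fact that $|\vartheta|=1$ to reduce the problem to a one-dimensional estimate along the direction of $\vartheta$. First I would observe that the only way the integrand $|\varphi+\tau\vartheta|^{-s}$ can blow up is if the line segment $\{\varphi+\tau\vartheta:\tau\in[0,1]\}$ passes near the origin. Writing $\tau_\ast:=-\varphi\cdot\vartheta$ for the value of $\tau$ minimizing $|\varphi+\tau\vartheta|$ (which need not lie in $[0,1]$), the orthogonal decomposition gives
\begin{align}\notag
|\varphi+\tau\vartheta|^2=|\varphi+\tau_\ast\vartheta|^2+(\tau-\tau_\ast)^2=:r_0^2+(\tau-\tau_\ast)^2,
\end{align}
using $|\vartheta|=1$, where $r_0\geq0$ is the distance from the origin to the full line. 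Thus the integrand is bounded above by $|\tau-\tau_\ast|^{-s}$, and the problem reduces to estimating $\int_0^1|\tau-\tau_\ast|^{-s}d\tau$.

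The key step is then that this last integral is bounded uniformly in $\tau_\ast\in\mathbb{R}$. Indeed, for $0<s<1$ the function $|\tau-\tau_\ast|^{-s}$ is locally integrable, and since its antiderivative is $\frac{1}{1-s}\mathrm{sgn}(\tau-\tau_\ast)|\tau-\tau_\ast|^{1-s}$, a direct computation shows
\begin{align}\notag
\int_0^1\frac{d\tau}{|\tau-\tau_\ast|^s}\leq\int_{-1}^{1}\frac{d\rho}{|\rho|^s}=\frac{2}{1-s},
\end{align}
where the worst case is when $\tau_\ast$ lies in $[0,1]$, and the bound only improves when $\tau_\ast$ is outside $[0,1]$ since the integrand is then bounded away from its singularity. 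Taking $C=2/(1-s)$, which depends only on $s$ as required, completes the estimate. An alternative to the explicit antiderivative would be to separately treat the subinterval where $|\tau-\tau_\ast|\leq\delta$ (controlling it by $\int_{-\delta}^\delta|\rho|^{-s}d\rho$) and its complement (where the integrand is at most $\delta^{-s}$), but the direct computation is cleaner.

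I do not anticipate a genuine obstacle here, as this is an elementary one-dimensional integrability fact dressed in $d$-dimensional geometric language; the only point requiring care is the orthogonal decomposition, where the normalization $|\vartheta|=1$ is precisely what makes the coefficient of $(\tau-\tau_\ast)^2$ equal to one and thereby allows the clean reduction to a translate of $|\rho|^{-s}$. The independence of the constant from $\varphi$ and $\vartheta$ is automatic because the bound on $\int_0^1|\tau-\tau_\ast|^{-s}d\tau$ is uniform over all real $\tau_\ast$, and the dimension $d$ plays no role beyond permitting the decomposition. One should simply record that $\varphi+\tau_\ast\vartheta$ is orthogonal to $\vartheta$ so that the Pythagorean identity applies.
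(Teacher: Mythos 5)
Your proof is correct, and it follows the same overall skeleton as the paper's — bound $|\varphi+\tau\vartheta|$ below by a one-dimensional distance $|\tau-c|$ for a $\tau$-independent $c$, then estimate the resulting translated singular integral — but both steps are executed differently. For the reduction, the paper applies the Cauchy--Schwarz inequality to get $|\varphi+\tau\vartheta|^2\geq\big(|\varphi|-\tau\big)^2$, placing the singularity of the majorant at $c=|\varphi|\geq0$, whereas you use the exact Pythagorean identity $|\varphi+\tau\vartheta|^2=|\varphi+\tau_\ast\vartheta|^2+(\tau-\tau_\ast)^2$ with $\tau_\ast=-\varphi\cdot\vartheta$, which is sharper (an identity rather than an inequality) and makes transparent where the normalization $|\vartheta|=1$ enters. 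For the endgame, the paper computes antiderivatives and splits into the cases $|\varphi|\leq1$, $1<|\varphi|\leq2$, and $|\varphi|>2$ (the last handled by the mean value theorem), while you dispose of all positions of the singularity at once via the uniform majorization $\int_0^1|\tau-\tau_\ast|^{-s}\,d\tau\leq\int_{-1}^{1}|\rho|^{-s}\,d\rho=\tfrac{2}{1-s}$, justified by interval containment when $\tau_\ast\in[0,1]$ and by monotonicity of $\rho\mapsto\rho^{-s}$ when $\tau_\ast$ falls outside. One small wording caveat: your phrase that the integrand is ``bounded away from its singularity'' when $\tau_\ast\notin[0,1]$ is not literally true for $\tau_\ast$ just outside the interval; the honest justification there is the monotonicity/shift argument ($\int_0^1(u+a)^{-s}\,du\leq\int_0^1 u^{-s}\,du$ for $a>0$), which you essentially have, so this is a matter of phrasing rather than a gap. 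Net effect: your route yields an explicit constant $C=2/(1-s)$ with no case analysis, at the cost of introducing the projection point $\tau_\ast$, while the paper's route needs only Cauchy--Schwarz but pays with a three-case computation.
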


\begin{proof}[Proof of \cref{lem:commutator2}]
We define the functional
    \begin{align}\label{def:L:functional:1}
        \mathcal{L}_{s,\ell}(f,g,h):=\iint m_{s,\ell}(\xi,\eta)\hat{f}(\xi-\eta)\hat{g}(\eta)\overline{\hat{h}(\xi)}d\eta d\xi,
    \end{align}
where
    \begin{align*}
        m_{s,\ell}(\xi,\eta):=|\xi|^{-s}P(\xi)\xi_\ell-|\xi-\eta|^{-s}P(\xi-\eta)(\xi-\eta)_\ell,
    \end{align*}
and observe that
    \begin{align}\notag
      \lb [\Lam^{-s}P(D)\bdy_\ell, g]f,h\rb=\mathcal{L}_{s,\ell}(f,g,h).
    \end{align}
Now let
    \begin{align}\label{def:A}
    \mathbf{A}(\tau,\xi,\eta):=\tau\xi+(1-\tau)(\xe)=(\xi-\eta)+\tau\eta=\xi-(1-\tau)\eta.
    \end{align}
For convenience, we will suppress the dependence of $\mathbf{A}$ on $\xi,\eta$ for the remainder of the proof. Observe that
    \begin{align}\label{E:meanvalue1}
        &|m_{s,\ell}(\xi,\eta)|\notag\\&=\abs{\int_{0}^{1}\frac{d}{{d\tau}}\left(\abs{\mathbf{A}(\tau)}^{-s}P(\mathbf{A}(\tau))\mathbf{A}(\tau)_{\ell}\right)d\tau}\notag\\
        &=\abs{\int_{0}^{1}\left(-s\abs{\mathbf{A}(\tau)}^{-s-2}(\mathbf{A}(\tau)\cdot \eta)P(\mathbf{A}(\tau))\mathbf{A}(\tau)_{\ell}+\abs{\mathbf{A}(\tau)}^{-s}(\nabla P(\mathbf{A}(\tau))\cdot \eta)\mathbf{A}(\tau)_{\ell}+\abs{\mathbf{A}(\tau)}^{-s}P(\mathbf{A}(\tau))\eta_{\ell}\right)d\tau}\notag\\
    &\le C\abs{\eta}\int_{0}^{1}\abs{\mathbf{A}(\tau)}^{-s}d\tau,
    \end{align}
where the fact $s\in(0,1)$ is invoked to obtain the last inequality. 
By assumptions on the supports of $\hat{f}$ and $\hat{h}$, we can assume that $\supp \hat{g}\subset\Bcal_{i+k+2}.$
Using this, we obtain
\[\mathcal{L}_{s,\ell}(f,g,h)=I+II,\]
where
\begin{align*}
    I=&\iint m_{s,\ell}(\xi,\eta)\hat{f}(\xi-\eta)\mathbbm{1}_{{\Bcal}_{i-3}}(\eta)\hat{g}(\eta)\overline{\hat{h}(\xi)}d\eta d\xi,\\
    II=&\iint m_{s,\ell}(\xi,\eta)\hat{f}(\xi-\eta)\mathbbm{1}_{{\Acal}_{i-3,i+k+2}}(\eta)\hat{g}(\eta)\overline{\hat{h}(\xi)}d\eta d\xi.
\end{align*}
Now we treat $I$ and $II$.
\subsubsection*{{Estimating} $I$\nopunct}: For $ \eta \in {\Bcal}_{i-3}$, we have
\begin{align*}
\abs{\mathbf{A}(\tau)} \ge \abs{\xe}-\tau\abs{\eta} \ge 2^{i-1}-2^{i-3} = 3(2^{i-3})\ge \frac{3}{16}\abs{\xe}.
\end{align*}
Thus
\begin{align*}
 |I| \le C\iint\abs{\xe}^{-s}|\eta|   \mathbbm{1}_{\Bcal_{i-3}}(\eta)|\hat{g}(\eta)||\hat{f}(\xi-\eta)||{\hat{h}(\xi)}|d\eta d\xi.
\end{align*}
By the Cauchy-Schwarz inequality, Young's convolution inequality, and Plancherel's theorem, we obtain
\begin{align}\label{est:I:intermediate}
 |I|\le C\Sob{|\eta|\mathbbm{1}_{\Bcal_{i-3}} \hat{g}}{L^{\frac{4}{4-s}}}\Sob{|\eta|^{-s}\hat{f}}{L^{\frac{4}{2+s}}}\Sob{h}{L^{2}}.
\end{align}
By H\"older's inequality, we have
\begin{align*}
\Sob{|\eta| \mathbbm{1}_{\Bcal_{i-3}}(\eta)\hat{g}(\eta)}{L^{\frac{4}{4-s}}}&\le \Sob{\mathbbm{1}_{\Bcal_{i-3}}|\eta|^{s+\eps-1}}{L^{\frac{4}{2-s}}}\Sob{|\eta|^{2-s-\eps}\hat{g}(\eta)}{L^{2}}
= C2^{i(\frac{s}{2}+\eps)} \Sob{g}{\Hdot^{2-s-\eps}},\\
\Sob{|\eta|^{-s}\hat{f}(\eta)}{L^{\frac{4}{2+s}}}&\le \Sob{\mathbbm{1}_{\Acal_i}|\eta|^{-s-\eps}}{L^{\frac{4}{s}}}\Sob{|\eta|^{\eps}\hat{f}(\eta)}{L^{2}}
\le 
C2^{i(-\frac{s}{2}-\eps)}\Sob{f}{\Hdot^\eps}.
\end{align*}
Upon returning to (\ref{est:I:intermediate}), we obtain 
\begin{align*}
 |I|\le C\Sob{g}{\Hdot^{2-s-\eps}}\Sob{f}{\Hdot^\eps}\Sob{h}{L^{2}},
\end{align*}
for some constant $C>0$, depending on $s,\eps$.

\subsubsection*{{Estimating} $II$\nopunct}: Let $\varphi:=\frac{\xe}{\Ae}$ and $\vartheta:=\frac{\eta}{\Ae}$.  We observe that for fixed $\xi$ and $\eta$, we have
\begin{align*}
\int_{0}^{1}\abs{\eta}\abs{ \mathbf{A}(\tau)}^{-s}\,d\tau=\Ae^{1-s}\int_{0}^{1}\frac{1}{\abs{\varphi+\tau \vartheta}^{s}}\,d\tau.
\end{align*}
By \cref{lem:elem:convex}, it follows that
\begin{align}\label{E:A-integral}
\int_{0}^{1}\abs{\eta}\abs{ \mathbf{A}(\tau)}^{-s}\,d\tau\le C\Ae^{1-s}.
\end{align}
Hence
 \begin{align*}
 |II| \le C\iint|\eta|^{1-s}   \mathbbm{1}_{\Acal_{i-3,i+k+2}}(\eta)|\hat{g}(\eta)||\hat{f}(\xi-\eta)||{\hat{h}(\xi)}|d\eta d\xi.
\end{align*}
Applying the Cauchy-Schwarz inequality, Young's convolution inequality, Plancherel's theorem, and the Cauchy-Schwarz inequality a second time yields
\begin{align}
|II|\le & C \Sob{|\eta|^{1-s-\eps}\mathbbm{1}_{\Acal_{i-3,i+k+2}}\hat{g}}{L^1}\Sob{|\eta|^{\eps}\hat{f}}{L^2}\Sob{\hat{h}}{L^{2}}\label{est:II:intermediate}\\
\le& C_{k}\Sob{|\eta|^{2-s-\eps}\mathbbm{1}_{\Acal_{i-3,i+k+2}}\hat{g}}{L^{2}}\Sob{f}{\Hdot^\eps}\Sob{h}{L^{2}}\notag \\
\le & C_{k}\Sob{g}{\Hdot^{2-s-\eps}}\Sob{f}{\Hdot^\eps}\Sob{h}{L^2}\notag.
\end{align}
\end{proof}
By slightly modifying the proof of \cref{lem:commutator2}, we obtain the following non-localized form of the above commutator estimate. We point out that similar estimates were also established in \cite{ChaeConstantinCordobaGancedoWu2012} and \cite{JollyKumarMartinez2020a}.
\begin{Lem}\label{lem:commutator3}
Let $s,\eps\in(0,1)$ be such that $\eps+s\le 1$, and $P$ be any Fourier multiplier satisfying (\ref{E:p:conditions}). Suppose that either $(f,g,h) \in L^{2}(\mathbb{R}^2)\times H^{2-s+\eps}(\mathbb{R}^2)\times L^{2}(\mathbb{R}^2)$ or $(f,g,h)\in H^{\eps}(\mathbb{R}^2)\times H^{2-s}(\mathbb{R}^2)\times H^{\eps}(\mathbb{R}^2) $. There exists a constant, $C>0$, depending only on $s,\epsilon$, such that
    \begin{align}
       |\langle [\Lam^{-s}P(D)\bdy_\ell,g]f,h\rangle|\leq C\min\left\{\Sob{g}{{H}^{2-s+\eps}}\Sob{ f}{L^2}\Sob{h}{L^2},\Sob{g}{{H}^{2-s}}\left(\Sob{f}{\Hdot^\eps}\Sob{h}{L^2}+\Sob{h}{\Hdot^\eps}\Sob{f}{L^2}\right)\right\},\quad \ell=1,2.\notag
    \end{align}
\end{Lem}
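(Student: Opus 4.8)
The plan is to re-run the argument for \cref{lem:commutator2}, but to replace the localization of $\hat{f}$ and $\hat{h}$ by a single global bound on the symbol. Recall that $\lb[\Lam^{-s}P(D)\bdy_\ell,g]f,h\rb=\mathcal{L}_{s,\ell}(f,g,h)$, and that the mean value computation \eqref{E:meanvalue1} together with \cref{lem:elem:convex}, applied with $\varphi=(\xe)/\Ae$ and $\vartheta=\eta/\Ae$, already yields the pointwise symbol bound
\[
|m_{s,\ell}(\xi,\eta)|\le C\Ae\int_0^1|\mathbf{A}(\tau)|^{-s}\,d\tau=C\Ae^{1-s}\int_0^1|\varphi+\tau\vartheta|^{-s}\,d\tau\le C\Ae^{1-s},
\]
valid for \emph{all} $\xi,\eta$ with no restriction on supports. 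This is the only place the commutator structure enters; the remaining work is to integrate this bound against three $L^2$ densities using suitable weights. Writing $F=|\hat f|$, $H=|\hat h|$, I will repeatedly use that for any $\Psi\in L^1$, Cauchy--Schwarz in $\xi$ followed by Young's convolution inequality gives $\iint\Psi(\eta)F(\xi-\eta)H(\xi)\,d\eta\,d\xi\le\Sob{\Psi}{L^1}\Sob{F}{L^2}\Sob{H}{L^2}\le\Sob{w}{L^2}\Sob{G}{L^2}\Sob{F}{L^2}\Sob{H}{L^2}$ whenever $\Psi=wG$.

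For the first bound (corresponding to the hypothesis $(f,g,h)\in L^2\times H^{2-s+\eps}\times L^2$) I would factor $\Ae^{1-s}|\hat g(\eta)|=w(\eta)\lbn\eta\rbn^{2-s+\eps}|\hat g(\eta)|$ with $w(\eta)=\Ae^{1-s}\lbn\eta\rbn^{-(2-s+\eps)}$, and take $G=\lbn\cdot\rbn^{2-s+\eps}|\hat g|$, so that $\Sob{G}{L^2}=\Sob{g}{H^{2-s+\eps}}$. The scheme above then produces $|\mathcal{L}_{s,\ell}(f,g,h)|\le C\Sob{w}{L^2}\Sob{g}{H^{2-s+\eps}}\Sob{f}{L^2}\Sob{h}{L^2}$, provided $w\in L^2(\RR^2)$, which holds since near infinity $w(\eta)\sim\Ae^{-1-\eps}$ (square-integrable because $\eps>0$) and near the origin $w$ is bounded because $s<1$.

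For the second bound (hypothesis $(f,g,h)\in H^\eps\times H^{2-s}\times H^\eps$, using $H^\eps\hookrightarrow\Hdot^\eps\cap L^2$) I would split the domain of integration using the elementary inclusion $\Ae\le\Ax+\Axe$: the regions $R_1=\{\Ae\le2\Axe\}$ and $R_2=\{\Ae\le2\Ax\}$ cover $\RR^2\times\RR^2$. On $R_1$ I write $\Ae^{1-s}\le C\Ae^{1-s-\eps}\Axe^{\eps}$, sending $\Axe^{\eps}$ onto $\hat f$ to produce $\Sob{f}{\Hdot^\eps}$, and use the weight $w_1(\eta)=\Ae^{1-s-\eps}\lbn\eta\rbn^{-(2-s)}$ against $G=\lbn\cdot\rbn^{2-s}|\hat g|$; on $R_2$ I instead send $\Ax^\eps$ onto $\hat h$ to produce $\Sob{h}{\Hdot^\eps}$. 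Summing the two contributions yields $|\mathcal{L}_{s,\ell}(f,g,h)|\le C\Sob{w_1}{L^2}\Sob{g}{H^{2-s}}\big(\Sob{f}{\Hdot^\eps}\Sob{h}{L^2}+\Sob{h}{\Hdot^\eps}\Sob{f}{L^2}\big)$, once I check $w_1\in L^2$: at infinity $w_1\sim\Ae^{-1-\eps}$, and at the origin $w_1$ is bounded because $1-s-\eps\ge0$.

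The only genuinely delicate point is the $L^2(\RR^2)$-integrability of the weights $w$ and $w_1$, which is exactly what forces the \emph{inhomogeneous} norm $H^{2-s+\eps}$ (resp. $H^{2-s}$) on $g$ rather than its homogeneous counterpart: the factor $\lbn\eta\rbn^{-1}$ tames the low-frequency behavior near $\eta=0$ that a homogeneous weight would not control, while the high-frequency decay $\Ae^{-1-\eps}$ (needing $\eps>0$) together with $s+\eps\le1$ secures square-integrability. I expect no obstruction beyond this bookkeeping; the essential input is the global symbol bound obtained from \cref{lem:elem:convex}, after which the localization apparatus of \cref{lem:commutator2} is no longer needed.
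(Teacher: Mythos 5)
Your proposal is correct and follows essentially the same route as the paper: the global symbol bound $|m_{s,\ell}(\xi,\eta)|\le C|\eta|^{1-s}$ from \eqref{E:meanvalue1} and \cref{lem:elem:convex}, followed by Cauchy--Schwarz and Young's convolution inequality with the inhomogeneous weights $|\eta|^{1-s}\langle\eta\rangle^{-(2-s+\eps)}$ and $|\eta|^{1-s-\eps}\langle\eta\rangle^{-(2-s)}$ in $L^2$. The only cosmetic difference is that you distribute $|\eta|^{\eps}$ onto $\hat f$ and $\hat h$ by splitting the integration domain into the regions $\{|\eta|\le2|\xi-\eta|\}$ and $\{|\eta|\le2|\xi|\}$, whereas the paper uses the pointwise subadditivity bound $|\eta|^{\eps}\le|\xi|^{\eps}+|\xi-\eta|^{\eps}$; these are interchangeable.
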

\begin{proof}
We proceed just as in the proof of \cref{lem:commutator2} and apply (\ref{E:meanvalue1}) and (\ref{E:A-integral}) to obtain
\begin{align}\label{ineq:Plancherel}
    |\langle [\Lam^{-s}P(D)\bdy_\ell,g]f,h\rangle|\le C\iint|\eta|^{1-s}   |\hat{g}(\eta)||\hat{f}(\xi-\eta)||{\hat{h}(\xi)}|d\eta d\xi.
\end{align}
Proceeding as in (\ref{est:II:intermediate}), we obtain
\begin{align*}
    |\langle [\Lam^{-s}P(D)\bdy_\ell,g]f,h\rangle|&\le C\Sob{|\eta|^{1-s}\hat{g}}{L^1}\Sob{f}{L^2}\Sob{h}{L^2}\\
    &\le C_{\eps}\Sob{g}{H^{2-s+\eps}}\Sob{f}{L^2}\Sob{h}{L^2}.
\end{align*}

On the other hand, upon multiplying and dividing by $|\eta|^{\eps}$ in \eqref{ineq:Plancherel}, applying the triangle inequality, $|\eta|^{\eps}\le|\xi|^{\eps}+|\xe|^{\eps}$, then applying the Cauchy-Schwarz inequality and Young's convolution inequality, we obtain
\begin{align*}
    |\langle [\Lam^{-s}P(D)\bdy_\ell,g]f,h\rangle|&\le C\Sob{|\eta|^{1-s-\eps}\hat{g}}{L^1}\left(\Sob{\Lam^{\eps}f}{L^2}\Sob{h}{L^2}+\Sob{f}{L^2}\Sob{\Lam^{\eps}h}{L^2}\right)\\
    &\le C_{\eps}\Sob{g}{H^{2-s}}\left(\Sob{f}{\Hdot^\eps}\Sob{h}{L^2}+\Sob{h}{\Hdot^\eps}\Sob{f}{L^2}\right).
\end{align*}
\end{proof}
Next we prove a commutator estimate for operators which are of the form of a product of Fourier multiplier operators given by $\Lam^{\si}, \bdy_{\ell}, \triangle_j, P(D)$, where we assume that $P$ satisfies (\ref{E:p:conditions}). 
We will let $\mathscr{D}$ denote  
    \begin{align*}
        \mathscr{D}=\Lam\ \text{or}\ \bdy_\ell,\quad \text{for}\ \ell=1,2.
    \end{align*}
For the next result, let us denote by $W(\mathbb{R}^2)$, the space of functions whose Fourier transform belongs to $L^1(\mathbb{R}^2)$.
    
\begin{Lem}\label{lem:commutator4}
Let $s \in [0,1)$, $\nu \in (0,1)$, $\rho \in \mathbb{R}$. Suppose that $\supp\hat{h}\subset{\Acal}_j$, for some $j\in \mathbb{Z}$, and that either $(f,g)\in (\dot{H}^{1-\nu}(\mathbb{R}^2)\times\dot{H}^{s+1}(\mathbb{R}^2))\cup(\dot{H}^s(\mathbb{R}^2)\times\dot{H}^{2-\nu}(\mathbb{R}^2))$ or $(f,\Lam g)\in ((W(\mathbb{R}^2)\cap\dot{H}^1(\mathbb{R}^2))\times\dot{H}^{s}(\mathbb{R}^2))\cup \dot{H}^s(\mathbb{R}^2)\times(W(\mathbb{R}^2)\cap\dot{H}^1(\mathbb{R}^2))$. Let $P$ be a Fourier multiplier symbol satisfying (\ref{E:p:conditions}). Then there exists a sequence $\{c_j\}\in\ell^2(\ZZ)$ such that $\Sob{\{c_j\}}{\ell^2}\leq1$ and
    \begin{align}
        |\lb [\Lam^{s+\rho}P(D){\mathscr{D}}\lpj,g]f,h  \rb|\notag\leq Cc_{j}
        \begin{cases}
              \min\left\{ \Sob{{f}}{\Hdot^{1-\nu}}\Sob{g}{\Hdot^{s+1}},\Sob{{f}}{\Hdot^s}\Sob{g}{\Hdot^{2-\nu}}\right\}\Sob{h}{\Hdot^{\rho+\nu}}    \\
              \min\{(\Sob{\hat{f}}{L^1}+\Sob{f}{\Hdot^1})\Sob{g}{\Hdot^{s+1}},(\Sob{\widehat{\Lam g}}{L^1}+\Sob{g}{\Hdot^2})\Sob{{f}}{\Hdot^{s}}\}\Sob{h}{\Hdot^{\rho}},
        \end{cases}
    \end{align}
for some constant $C>0$, depending only on $s,\rho, \nu$. 
\end{Lem}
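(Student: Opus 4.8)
The plan is to pass to the Fourier side and exploit the same mean value cancellation used in \cref{lem:commutator2}. Writing $t(\zeta):=|\zeta|^{s+\rho}P(\zeta)m_{\mathscr{D}}(\zeta)\phi_j(\zeta)$ for the symbol of $\Lam^{s+\rho}P(D)\mathscr{D}\lpj$, where $m_{\mathscr{D}}$ denotes the symbol of $\mathscr{D}$ (so $|m_{\mathscr{D}}(\zeta)|\le C|\zeta|$ and $|\nabla m_{\mathscr{D}}(\zeta)|\le C$), one has
\[\lb[\Lam^{s+\rho}P(D)\mathscr{D}\lpj,g]f,h\rb=\iint\big(t(\xi)-t(\xi-\eta)\big)\hat g(\eta)\hat f(\xi-\eta)\overline{\hat h(\xi)}\,d\eta\,d\xi,\]
and $\supp\hat h\subset\Acal_j$ forces $\xi\in\Acal_j$, while $\supp\phi_j\subset\Acal_j$ makes $t$ a $C^\infty_c$ symbol supported in $\Acal_j$. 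First I would record the pointwise symbol bound
\[|t(\xi)-t(\xi-\eta)|\le\Ae\int_0^1|\nabla t(\mathbf{A}(\tau))|\,d\tau\le C\Ae\,\Ax^{s+\rho},\]
obtained exactly as in \eqref{E:meanvalue1}: the four terms of $\nabla t$ are controlled using \eqref{E:p:conditions} (so $|P|\le C$ and $|\nabla P(\zeta)|\le C|\zeta|^{-1}$), the bounds on $m_{\mathscr{D}}$, and $|\nabla\phi_j(\zeta)|\le C2^{-j}$, all evaluated on $\supp\phi_j=\Acal_j$ where $|\zeta|\sim2^j\sim\Ax$; since $\nabla t$ vanishes off $\Acal_j$, no difficulty arises when the segment $\mathbf{A}(\tau)$ of \eqref{def:A} crosses the origin. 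This single bound drives both estimates.

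For the first (purely Sobolev) estimate I would factor $\Ax^{s+\rho}=\Ax^s\Ax^\rho$ and introduce auxiliary functions with nonnegative Fourier transforms $\widehat{\til{g}}=\Ae|\hat g|$, $\widehat{\til{f}}=|\hat f|$, $\widehat{\til{h}}=\Ax^\rho|\hat h|$ (so $\supp\widehat{\til{h}}\subset\Acal_j$). The symbol bound yields $|\lb\cdots\rb|\le C\mathcal{L}_s(\til{f},\til{g},\til{h})$ in the notation of \cref{lem:commutator1} with $\si=s\in(-1,1)$, and I would apply \cref{lem:commutator1} with $\epsilon=\nu$ (its hypotheses $\nu\in(0,2)$ and $s>\nu-1$ hold since $s\ge0>\nu-1$). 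As $\Sob{\til{g}}{\Hdot^s}=\Sob{g}{\Hdot^{s+1}}$, $\Sob{\til{g}}{\Hdot^{1-\nu}}=\Sob{g}{\Hdot^{2-\nu}}$, $\Sob{\til{f}}{\Hdot^{1-\nu}}=\Sob{f}{\Hdot^{1-\nu}}$, and $\Sob{\til{f}}{\Hdot^s}=\Sob{f}{\Hdot^s}$, this reproduces the first $\min$; finally $\supp\widehat{\til{h}}\subset\Acal_j$ turns the output $2^{\nu j}\Sob{\til{h}}{L^2}=2^{\nu j}\Sob{h}{\Hdot^\rho}$ into $\Sob{h}{\Hdot^{\rho+\nu}}$.

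For the second estimate I would instead bound $\iint\Ae\,\Ax^{s+\rho}|\hat g||\hat f||\hat h|$ directly, proving the bound by each of the two $\min$-terms separately. Extracting $\Ax^\rho\sim2^{j\rho}$ onto $h$ by Cauchy--Schwarz in $\xi$ over $\Acal_j$ produces $\Sob{h}{\Hdot^\rho}$, and the bounded overlap of $\{\Acal_j\}$ yields the $\ell^2$ sequence $\{c_j\}$. The remaining weight $2^{js}\Ae$ must be converted into the $L^2$-based norm $\Sob{g}{\Hdot^{s+1}}$, which Young's inequality cannot supply on its own: this is the main obstacle, and it is precisely why $s<1$ and the combined $W\cap\Hdot^1$ norm appear. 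To resolve it I would split the $\eta$-integral. On $\Ae\gtrsim2^j$ one has $2^{js}\le C\Ae^s$, so the weight is $\le C\Ae^{s+1}$ and Young's inequality ($\hat f\in L^1$, $\Ae^{s+1}\hat g\in L^2$) gives the $\Sob{\hat f}{L^1}\Sob{g}{\Hdot^{s+1}}$ piece. On the low region $\eta\in\Bcal_{j-3}$ one has $\Axe\sim2^j$, so $|\hat f(\xi-\eta)|\le C2^{-j}\Axe|\hat f(\xi-\eta)|$ extracts $\Sob{f}{\Hdot^1}$, while the surviving factor $\Sob{\Ae\indFn{\Bcal_{j-3}}\hat g}{L^1}$ is treated by Cauchy--Schwarz in $\eta$ against $\Ae^{-s}$, whose square is integrable at the origin exactly because $s<1$, giving $\Sob{\Ae^{-s}\indFn{\Bcal_{j-3}}}{L^2}\sim2^{j(1-s)}$ and hence $\le C2^{j(1-s)}\Sob{g}{\Hdot^{s+1}}$; the powers of $2^j$ then cancel. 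Together these produce the $(\Sob{\hat f}{L^1}+\Sob{f}{\Hdot^1})\Sob{g}{\Hdot^{s+1}}$ factor. The second $\min$-term follows from the symmetric argument with the roles of $f$ and $\Lam g$ interchanged—placing the $s$-power on $f$ via $\Sob{f}{\Hdot^s}$ and the $W\cap\Hdot^1$ norm on $\Lam g$ (so that $\Sob{\Lam g}{\Hdot^1}=\Sob{g}{\Hdot^2}$). I expect the only delicate bookkeeping beyond this to be tracking $\{c_j\}$ through the region splitting, which is routine given the finite overlap of the annuli.
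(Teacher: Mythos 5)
Your proof is correct, and its skeleton---the mean-value estimate on the localized symbol along the segment $\mathbf{A}(\tau)$ of \eqref{def:A}, followed by reduction to weighted convolution bounds---matches the paper's; in particular, your first (purely Sobolev) estimate via \cref{lem:commutator1} with $\si=s$, $\eps=\nu$ is essentially the paper's step. You genuinely diverge in two places. First, you keep the frequency localization inside the mean-value step: since $\nabla t$ is supported in $\Acal_j$, you obtain the sharper pointwise bound $C\Ae\Ax^{s+\rho}$, whereas the paper retains $\int_0^1|\mathbf{A}(\tau)|^s\,d\tau$ in \eqref{E:meanvalue2} and loosens it by the triangle inequality to $C(\Axe^{s}+\Ae^{s})\Ae\Ax^{\rho}$ in \eqref{est:m:secondcase}; that split into pieces $I$ (weight $\Ae^s$ on $g$) and $II$ (weight $\Axe^s$ on $f$) is the paper's mechanism for redistributing the weight. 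Second, and more substantively, for the $W$-norm estimate the paper bounds $I$ and $II$ in two ways each---Young's inequality gives $\Sob{\hat f}{L^1}\Sob{g}{\Hdot^{s+1}}$ for $I$ and $\Sob{f}{\Hdot^s}\Sob{\widehat{\Lam g}}{L^1}$ for $II$, while a second application of \cref{lem:commutator1} with $\si=\eps=s$ gives $\Sob{f}{\Hdot^1}\Sob{g}{\Hdot^{s+1}}$ for $II$ and $\Sob{f}{\Hdot^s}\Sob{g}{\Hdot^2}$ for $I$---so the hybrid norms arise from cross-wise summation of the two pieces. You instead obtain the two summands from a high/low splitting of the $\eta$-integral: Young on $|\eta|\gtrsim 2^j$, and on $\Bcal_{j-3}$ the constraint $\Axe\sim 2^j$ upgrades $f$ to $\Hdot^1$ while Cauchy--Schwarz against $\Ae^{-s}$ (square-integrable near the origin precisely because $s<1$) recovers $\Sob{g}{\Hdot^{s+1}}$ with exactly compensating powers of $2^j$. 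Your route is more self-contained (one invocation of \cref{lem:commutator1} rather than two) and makes the provenance of the hybrid norm $\Sob{\hat f}{L^1}+\Sob{f}{\Hdot^1}$ transparent as a frequency dichotomy; the paper's route avoids region splitting and reuses its product machinery, at the cost of re-inserting a factor $\Ax^{s}2^{-js}$ to put $I$ and $II$ in the form required by \cref{lem:commutator1}. Your $c_j$ bookkeeping is also sound: finite overlap of the annuli handles the Young pieces, and in the low region the constraint $\xi-\eta\in\Acal(2^{j-2},2^{j+2})$ lets you take the $L^2$ norm of $\Lam f$ over an enlarged annulus, which is $\ell^2$-summable in $j$.
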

\begin{proof}
We will only treat the case of $\mathscr{D}=\bdy_{\ell}$ to avoid redundancy in the argument. The proof for the case $\mathscr{D}=\Lam$ is similar.
First, let us define
    \begin{align*}
        \mathcal{L}^{s}_{j}(f,g,h):=\iint\limits_{\xi\in{\Acal}_j} m_{s,j}(\xi,\eta)\hat{f}(\xi-\eta)\hat{g}(\eta)\overline{\hat{h}(\xi)}\, d\eta \, d\xi,
    \end{align*}
where
    \begin{align*}
        m_{s,j}(\xi,\eta):= \phi_{j}(\xi)P(\xi)\abs{\xi}^{s}\xi_{\ell}-\phi_{j}(\xe)P(\xe)\abs{\xi-\eta}^{s}(\xi-\eta)_{\ell}.
    \end{align*}
Then, using Plancherel's theorem, we see that
    \begin{align}\label{def:L:comm:relation}
        \mathcal{L}^{s}_{j}(f,g,h)=\lb[\Lam^{s}P(D)\mathscr{D}\lpj,g]f,h\rb.
    \end{align}
It is therefore equivalent to obtain bounds on $\mathcal{L}_{j}^{s+\rho}$. 

Let $\mathbf{A}(\tau)$  be as in (\ref{def:A}). By \eqref{E:p:conditions} and the facts that $\supp \phi_{j} \subset \mathcal{A}_{j}$, $\supp \nabla \phi\subset\mathcal{A}_0$, and $\xi\in\mathcal{A}_j$, we have
 \begin{align}\label{E:meanvalue2}
        |m_{s+\rho,j}(\xi,\eta)|&=\bigg|\int_{0}^{1}\frac{d}{{d\tau}}\left(\phi_{j}(\mathbf{A}(\tau))\abs{\mathbf{A}(\tau)}^{s+\rho}P(\mathbf{A}(\tau))\mathbf{A}(\tau)_{\ell}\right)d\tau\bigg|\notag\\
        &=\bigg|\int_{0}^{1}\bigg\{\left(\nabla \phi(2^{-j}\mathbf{A}(\tau))\cdot(2^{-j}\eta)+(s+\rho)\phi_{j}(\mathbf{A}(\tau))\abs{\mathbf{A}(\tau)}^{-2}\mathbf{A}(\tau)\cdot \eta\right)P(\mathbf{A}(\tau))\mathbf{A}(\tau)_{\ell}\notag\\
        & \qquad\qquad+\phi_{j}(\mathbf{A}(\tau))\left[\nabla P(\mathbf{A}(\tau))\cdot \eta\mathbf{A}(\tau)_{\ell}+P(\mathbf{A}(\tau))\eta_{\ell} \right]\bigg\} \abs{\mathbf{A}(\tau)}^{s+\rho}d\tau\bigg|\notag\\
    &\le C\abs{\eta}\abs{\xi}^{\rho}\int_{0}^{1}\abs{\mathbf{A}(\tau)}^{s}d\tau+ C\abs{\eta}\phi_j(\mathbf{A}(\tau))\int_{0}^{1}\abs{\mathbf{A}(\tau)}^{s+\rho}d\tau\leq C\abs{\eta}\abs{\xi}^\rho\int_0^1|\mathbf{A}(\tau)|^s d\tau.
    \end{align}
By the triangle inequality, we have
\begin{align}\label{est:m:secondcase}
  |m_{s+\rho,j}(\xi,\eta)|\le C(\abs{\xe}^{s}+\abs{\eta}^{s})\abs{\eta}\abs{\xi}^{\rho}.  
\end{align}
Upon returning to \eqref{def:L:comm:relation}, and applying \eqref{est:m:secondcase}, we obtain
\begin{align*}
        |\mathcal{L}^{s+\rho}_{j}(f,g,h)|
        \leq& C\iint\limits_{\xi\in{\Acal}_j}\left(|\xi-\eta|^{s}+|\eta|^{s}\right)|\hat{ f}(\xi-\eta)|\widehat{\Lam g}(\eta)|\abs{\xi}^{\rho}|\hat{h}(\xi)|d\eta d\xi\\
        =&C\iint\limits_{\xi\in{\Acal}_j}|\hat{ f}(\xi-\eta)|\widehat{\Lam^{s+1} g}(\eta)|\abs{\xi}^{\rho}|\hat{h}(\xi)|d\eta d\xi+C\iint\limits_{\xi\in{\Acal}_j}|\Lam^{s}\hat{ f}(\xi-\eta)|\widehat{\Lam g}(\eta)|\abs{\xi}^{\rho}|\hat{h}(\xi)|d\eta d\xi\\
        =&I+II.
        \end{align*}
Applying the Cauchy-Schwarz inequality, Young's convolution inequality, and Bernstein inequality, we obtain
\[|I|\le Cc_{j}\Sob{\hat{f}}{L^1}\Sob{g}{\Hdot^{s+1}}\Sob{h}{\Hdot^{\rho}},\quad |II|\le Cc_{j}\Sob{{f}}{\Hdot^{s}}\Sob{\widehat{\Lam g}}{L^1}\Sob{h}{\Hdot^{\rho}}.\]
Applying \cref{lem:commutator1} with $\si=\eps=s$, we obtain
\[|I|\le Cc_{j}\Sob{f}{\Hdot^s}\Sob{g}{\Hdot^{2}}\Sob{h}{\Hdot^{\rho}},\quad |II|\le Cc_{j}\Sob{{f}}{\Hdot^{1}}\Sob{ g}{\Hdot^{s+1}}\Sob{h}{\Hdot^{\rho}}.\]
Applying \cref{lem:commutator1} with $\si=s, \eps=\nu$, we obtain
\[|I|,|II|\le Cc_{j}\min\left\{ \Sob{{f}}{\Hdot^{1-\nu}}\Sob{g}{\Hdot^{s+1}},\Sob{{f}}{\Hdot^s}\Sob{g}{\Hdot^{2-\nu}}\right\}\Sob{h}{\Hdot^{\rho+\nu}}.\]
Collecting the estimates above, we obtain the required result.
\end{proof}
\section{Estimates for an inhomogeneous linear conservation law with modified flux}\label{sect:mod:flux}
The proof of our main result, \cref{T:theorem1}, will rely on estimates for a linear scalar conservation law whose flux accommodates the commutator structure associated with the skew-adjoint operator $\nabla^{\perp}\Lam^{\beta-2}p(\Lam)$. This commutator structure is crucial to establishing continuity of the corresponding flow map. Given $q$ and $G$ sufficiently smooth, the conservation law and its corresponding initial value problem is given as follows
\begin{align}\label{E:mod:claw}
   \begin{cases}
   \partial_{t}\theta+ \Div F_q(\tht)=G,\\
   \tht(0,x)=\tht_0(x).
   \end{cases}
\end{align}
  where the flux, $F_q(\tht)$, is defined by
\begin{align}\label{def:mod:flux}
F_q(\tht):=(\nabla^{\perp}\Lam^{\be-2}p(\Lam)q) \tht+\Lam^{\be-2}p(\Lam)(({\nabla}^\perp\tht)q)
\end{align}
Observe that, formally, we have $\Div F_{-\tht}(\tht)=-({\nabla}^{\perp}\Lam^{\be-2}p(\Lam)\tht)\cdotp{\nabla}\tht=u\cdotp{\nabla}\tht$. Hence, we recover equation \eqref{E:log-beta} in the particular case when $q=-\tht$.
This modification to the original flux is precisely what allows us to obtain estimates in the space $H^{\be+1}$ that are not otherwise available for the linear transport equation with a Lipschitz regular advecting velocity $u$. Ultimately, \eqref{E:mod:claw} will be exploited to establish continuity of the data-to-solution mapping $\Phi:H^{\be+1}\goesto C([0,T];H^{\be+1})$, $\tht_0\mapsto (\Phi(\tht_0))(t)=\tht(t;\tht_0)$, where $\tht(t;\tht_0)$ represents the solution of \eqref{E:log-beta} corresponding to the initial value problem \eqref{E:log-beta}.

To this end, we must first develop apriori estimates for the system \eqref{E:mod:claw}. These estimates will guarantee its own well-posedness (cf. \cref{T:modclaw:wellpsdn}). The proof of continuity of $\Phi$ will then rely on a continuity-with-respect-to-parameters-type of result (cf. \cref{lem:stability}).

It will be convenient to introduce the following notation:
    \begin{align*}
        A=\Lam^{\be-2}p(\Lam),\quad \Lam^{\si}_{j}=\Lam^{\si}\De_{j}, \quad \bdy^\perp=(-\bdy_2,\bdy_1).
    \end{align*}
We will also make use of the convention that we sum over repeated indices, unless they correspond to Littlewood-Paley operators. Given $q$, let $v$ denote
    \begin{align}\label{def:vel:phi}
        v:=-{\nabla}^{\perp}A q.
    \end{align}
Then we have ${\nabla}\cdotp v=0$; in particular, this implies that
    \begin{align}\label{eq:v:cancel}
        \lb v\cdotp\nabla h,h\rb=0,
    \end{align}
for any sufficiently smooth function $h$. We will first establish $L^2$ space estimates. Then we will proceed to establishing estimates in Sobolev spaces $\Hdot^{\si}$ for $\si \in [1,\be+1]$. 

\subsection{$L^2$ estimates} {Taking the inner product in $L^2$ of \eqref{E:mod:claw}  with $\tht$, we obtain}
	\begin{align}\label{balance} 
	   {\frac{1}{2}\frac{d}{dt}\Sob{\tht}{L^2}^2+\lb \Div F_q(\tht),{\tht}\rb=\lb G ,\tht \rb.}
	\end{align}
Note that $A\bdy_\ell$ is a skew-adjoint operator. As a consequence of this and \eqref{eq:v:cancel}, we see that
\begin{align}\label{E:skew-adjoint}
    \langle \Div F_{q}(\tht),\theta\rangle=-\lb A{\nabla}\cdotp(({\nabla}^\perp q)\tht),\tht\rb=\lb\nabla^\perp q\cdotp\nabla (A\tht),\tht\rb
    &=-\frac{1}{2}\lb [A,\nabla^\perp q\cdotp\nabla]\tht,\tht\rb=-\frac{1}{2}\lb [A\bdy_\ell,(\nabla^\perp q)^\ell]\tht,\tht\rb.
\end{align}
Upon applying \cref{lem:commutator3} with $s=2-\beta$ and $\eps>0$ sufficiently small, we obtain
\begin{align}
    |\langle \Div F_{q}(\tht),\theta\rangle|
    \le C \Sob{q}{H^{\beta+1}}\Sob{\theta}{\Hdot^{\eps}}\Sob{\theta}{L^2}. \label{E:Young}
\end{align}
Returning now to (\ref{balance}), applying the Cauchy-Schwarz inequality to the term on the right-hand side, then invoking \eqref{E:Young}, we arrive at
\begin{align}\label{est:apriori:L2}
    \frac{d}{dt}\Sob{\tht}{L^2}^2\le C\Sob{q}{H^{\beta+1}}\Sob{\theta}{\Hdot^{\eps}}\Sob{\theta}{L^2}+C\Sob{G}{L^2}\Sob{\tht}{L^2}.
\end{align}
\subsection{Homogeneous Sobolev space estimates} 
{Upon applying the operator $\Lam_j^{\si}$ to \eqref{E:mod:claw}, then taking the $L^2$-inner product of the resulting equation with $\Lam_j^{\sigma}\tht$, we obtain}
	\begin{align}\label{eq:balance:basic}
	   {\frac{1}{2}\frac{d}{dt}\Sob{\Lam^{\si}_{j}\tht}{L^2}^2=-\lb \Lam^{\si}_j{\nabla}\cdotp F_q(\tht),\Lam^{\si}_{j}{\tht}\rb+\lb \Lam^{\si}_{j}G ,\Lam^{\si}_{j}\tht\rb=I+II}
	\end{align}
We will first treat $I$. For this, we will distinguish between the two cases, ${\si \in [1,2)}$ and ${\si \in [2,\be+1]}$.

\subsubsection*{Case: $\si \in [1,2)$} In this case, we make use of the fact that $\nabla^\perp q$ is divergence-free and the skew self-adjointness of $A\bdy_\ell$ in order to decompose $I$ as
\begin{align*}
    \lb \Lam^{\si}_j\Div F_q(\tht),\Lam^{\si}_{j}{\tht}\rb&=\underbrace{\lb \Lam^{\si}_j(\nabla^{\perp}Aq\cdot\nabla \tht),\Lam^{\si}_{j}{\tht}\rb}_{I^a}-\underbrace{\lb \Lam^{\si}_jA(\nabla^{\perp}q\cdot\nabla \tht),\Lam^{\si}_{j}{\tht}\rb}_{I^b}\\
    &=I_{1}+I_{2}+I_{3}+I_{4},
\end{align*}
where
\begin{align*}
    I_{1}&=I^a-I_{2}= \lb \Lam^{\si}_j(\nabla^{\perp}Aq\cdot\nabla \tht),\Lam^{\si}_{j}{\tht}\rb-\lb \nabla^{\perp}Aq\cdot\nabla\Lam^{\si}_j \tht,\Lam^{\si}_{j}{\tht}\rb\\
    &=\lb [\Lam^{\si}_{j},\bdy^{\perp}_{\ell}Aq]\bdy_{\ell}\tht,\Lam^{\si}_{j}\tht \rb,\\
    I_{2}&=\lb \nabla^{\perp}Aq\cdot\nabla\Lam^{\si}_j \tht,\Lam^{\si}_{j}{\tht}\rb=0,\\
    I_{3}&=-I^b+I_{4}=-\lb \Lam^{\si}_jA(\nabla^{\perp}q\cdot\nabla \tht),\Lam^{\si}_{j}{\tht}\rb+\lb \nabla^{\perp}q\cdot\nabla{A}^{\frac{1}{2}}\Lam^{\si}_j \tht,{A}^{\frac{1}{2}}\Lam^{\si}_{j}{\tht}\rb\\
    &=-\lb [\Lam^{\si}_{j}{A}^{\frac{1}{2}},\bdy^{\perp}_{\ell}q]\bdy_{\ell}\tht,\Lam^{\si}_{j}{A}^{\frac{1}{2}}\tht \rb,\\
    I_{4}&=\lb \nabla^{\perp}q\cdot\nabla{A}^{\frac{1}{2}}\Lam^{\si}_j \tht,{A}^{\frac{1}{2}}\Lam^{\si}_{j}{\tht}\rb=0.
\end{align*}
Applying \cref{lem:commutator4} with $s=\si-1$, $\rho=0$ and $P=I$, $\mathscr{D}=\Lam$, we obtain
\begin{align*}
    |I_{1}|&\le Cc_{j}(\Sob{\mathcal{F}( \bdy^{\perp}_{\ell}\Lam A q)}{L^1}+\Sob{\bdy^{\perp}_{\ell}Aq}{\Hdot^2})\Sob{{\bdy_{\ell}\tht}}{\Hdot^{\si-1}}\Sob{\Lam^{\si}_{j}\tht}{L^2}.
    \end{align*}
Using the Cauchy-Schwarz inequality, (\ref{def:p}), and Plancherel's theorem we have
\begin{align}\label{est:L1:estimate}
    \Sob{\mathcal{F}( \bdy^{\perp}_{\ell}\Lam A q)}{L^1}&\le  \Sob{\mathbbm{1}_{\mathcal{B}_{0}}(\eta)|\eta|}{L^2}\Sob{\eta_{\ell}^{\perp}|\eta|^{\be-2}p(|\eta|)\hat{q}(\eta)}{L^2} + \Sob{\mathbbm{1}_{\mathcal{B}_{0}^{c}}(\eta)|\eta|^{-1}p(|\eta|)}{L^2}\Sob{\eta_{\ell}^{\perp}|\eta|^{\be}\hat{q}(\eta)}{L^2}\notag\\
    &\le C\Sob{q}{H^{\be+1}}.
\end{align}
Thus
\begin{align*}
    |I_{1}|&\le Cc_{j}\Sob{q}{H^{\be+1}}\Sob{\tht}{\Hdot^{\si}}\Sob{\Lam^{\si}_{j}\tht}{L^2}.
\end{align*}
Applying \cref{lem:commutator4} with $s=\si-1$, $\rho=\frac{\be-2}{2}$, $\nu=2-\be$ and $P=p^{\frac{1}{2}}$, $\mathscr{D}=\Lam$, we obtain
\begin{align*}
    |I_{3}|&\le Cc_{j}\Sob{\bdy^{\perp}_{\ell}q}{\Hdot^{\be}}\Sob{{\bdy_{\ell}\tht}}{\Hdot^{\si-1}}\Sob{{A}^{\frac{1}{2}}\Lam^{\si}_j \tht}{\Hdot^{\frac{2-\be}{2}}}\\
    &\le Cc_{j}\Sob{q}{\Hdot^{\be+1}}\Sob{\tht}{\Hdot^{\si}}\Sob{\Lam^{\si}_{j}\tht}{L^2}.
\end{align*}
\subsubsection*{Case: $\si \in [2,\be+1]$} In this case, we decompose $I$ as
\begin{align*}
    \lb \Lam^{\si}_j \Div F_q(\tht),\Lam^{\si}_{j}{\tht}\rb&=\underbrace{\lb \Lam^{\si}_j(\nabla^{\perp}Aq\cdot\nabla \tht),\Lam^{\si}_{j}{\tht}\rb}_{J^a}-\underbrace{\lb \Lam^{\si}_jA(\nabla^{\perp}q\cdot\nabla \tht),\Lam^{\si}_{j}{\tht}\rb}_{J^b}\\
    &=J_{1}+J_{2}+J_{3}+J_{4}+J_{5},
\end{align*}
where
\begin{align}
	J_{1}=&\underbrace{\lb (\nabla^{\perp}A\Lam^{\si}_{j}{q} \cdot \nabla) \tht,\Lam^{\si}_{j} \tht \rb}_{J_{1}^a} -\underbrace{\lb \nabla^{\perp}A\cdot(\Lam^{\si}_{j}{q}\nabla \tht),\Lam^{\si}_{j} \tht \rb}_{J_{1}^b}\notag\\
	=&\lb [\bdy_{\ell}^{\perp}A,\bdy_{\ell} \tht]\Lam^{\si}_{j}q,\Lam^{\si}_{j}\tht\rb\notag\\
	J_{2}=&\lb \nabla^{\perp}A{q}\cdot {\nabla} \Lam^{\si}_{j} \tht,\Lam^{\si}_{j} \tht \rb=0\notag\\
	J_{3}=&J^a-J_{1}^a-J_{2}\notag\\
	=&\left \{ \lb \Lam^{\si}_{j}(\nabla^{\perp}A{q} \cdot \nabla \theta),\Lam^{\si}_{j} \tht  \rb
	- \lb (\nabla^{\perp}A\Lam^{\si}_{j}{{q}} \cdot \nabla) \tht,\Lam^{\si}_{j} \tht \rb  -\lb \nabla^{\perp}A{q}\cdot {\nabla} \Lam^{\si}_{j} \tht,\Lam^{\si}_{j} \tht \rb \right\}\notag\\
	J_{4}=&\lb (\nabla^{\perp}{q}\cdot {\nabla} A^{\frac{1}{2}}\Lam^{\si}_{j} \tht),A^{\frac{1}{2}}\Lam^{\si}_{j} \tht \rb=0\notag\\
	J_{5}=&-J^b+J_{1}^b+J_{4}\notag\\
	=&- \left \{ \lb \Lam^{\si}_{j} A(\nabla^{\perp}q\cdot\nabla \tht),\Lam^{\si}_{j}{\tht}\rb
	- \lb \nabla^{\perp}A\cdot(\Lam^{\si}_{j}{q}\nabla \tht),\Lam^{\si}_{j} \tht \rb  -\lb (\nabla^{\perp}{q}\cdot {\nabla} A^{\frac{1}{2}}\Lam^{\si}_{j} \tht),A^{\frac{1}{2}}\Lam^{\si}_{j} \tht \rb \right\}\notag
\end{align}
Applying \cref{lem:commutator2} with $s=2-\be$ and $\eps=\be+1-\si$, we obtain
\begin{align*}
    |J_1|&\le C\Sob{\bdy_{\ell}\tht}{\Hdot^{\si-1}}\Sob{\Lam^{\si}_{j}q}{\Hdot^{\be+1-\si}}\Sob{\Lam^{\si}_{j}\tht}{L^2}\\
    &\le Cc_{j}\Sob{q}{\Hdot^{\be+1}}\Sob{\tht}{\Hdot^{\si}}\Sob{\Lam^{\si}_{j}\tht}{L^2},
\end{align*}
where
\[c_{j}=\frac{\Sob{\Lam^{\si}_{j}q}{\Hdot^{\be+1-\si}}}{\Sob{q}{\Hdot^{\be+1}}}\in \ell^{2}(\mathbb{Z}).\]
Now, as in \cite{HuKukavicaZiane2015}, we observe that we may write $J_{3}$ as a double commutator. Indeed, for any $\widetilde{\si}>2$, we have
	\begin{align}\label{split:Lam:rewrite}
	\Lam^{\widetilde{\si}}f=\Lam^{\widetilde{\si}-2}(-\De)f=-(\Lam^{\widetilde{\si}-2}\bdy_{l})\bdy_{l}f.
	\end{align}
	Then by applying \eqref{split:Lam:rewrite}, the product rule, and \eqref{eq:v:cancel}, we have
	\begin{align*}
	    J_{3}=&-\lb\Lam^{\si-2}_{j}\bdy_{l}(\nabla^{\perp}A\bdy_{l}{q} \cdot \nabla \theta),\Lam^{\si}_{j} \tht ,\rb+\lb (\nabla^{\perp}A\Lam^{\si-2}_{j}\bdy_{l}\bdy_{l}{{q}} \cdot \nabla) \tht,\Lam^{\si}_{j} \tht \rb\\
	    &-\lb\Lam^{\si-2}_{j}\bdy_{l}(\nabla^{\perp}A{q} \cdot \nabla \bdy_{l}\theta),\Lam^{\si}_{j} \tht ,\rb+\lb (\nabla^{\perp}A{q}\cdot {\nabla} \Lam^{\si-2}_{j}\bdy_{l}\bdy_{l} \tht),\Lam^{\si}_{j} \tht \rb\\
	    =&-\underbrace{\lb[\Lam_{j}^{\si-2}\bdy_{l},\bdy_{\ell}\tht]\bdy_{\ell}^{\perp}\bdy_{l}A q,\Lam^{\si}_{j} \tht\rb}_{J^a_3}-\underbrace{\lb [\Lam_{j}^{\si-2}\bdy_{l},\bdy_{\ell}^{\perp}A q]\bdy_{\ell}\bdy_{l}\tht ,\Lam_{j}^{\si}\tht\rb}_{J^b_3}
	\end{align*}
Similarly, we can express $J_5$ as
\begin{align*}
    J_{5}=\underbrace{\lb[\Lam_{j}^{\si-2}\bdy_{l},\bdy_{\ell}\tht]\bdy_{\ell}^{\perp}\bdy_{l} q,A\Lam^{\si}_{j} \tht\rb}_{J^a_5}+\underbrace{\lb [\Lam_{j}^{\si-2}A^{\frac{1}{2}}\bdy_{l},\bdy_{\ell}^{\perp} q]\bdy_{\ell}\bdy_{l}\tht ,A^{\frac{1}{2}}\Lam_{j}^{\si}\tht\rb}_{J^b_5}
\end{align*}
Applying \cref{lem:commutator4} with $s=\si-2$, $\rho=0$ and $P=I$, $\mathscr{D}=\bdy_{l}$, then proceeding just like in (\ref{est:L1:estimate}), we obtain
\begin{align*}
    |J^a_3|&\le Cc_{j}(\Sob{\mathcal{F}(\bdy_{\ell}^{\perp}\bdy_{l}A q)}{L^1}+\Sob{\bdy_{\ell}^{\perp}\bdy_{l}A q}{\Hdot^1})\Sob{\bdy_{\ell}\tht}{\Hdot^{\si-1}}\Sob{\Lam^{\si}_{j}\tht}{L^2}\\
    &\le Cc_{j}\Sob{q}{H^{\be+1}}\Sob{\tht}{\Hdot^{\si}}\Sob{\Lam^{\si}_{j}\tht}{L^2}.
\end{align*}
Applying \cref{lem:commutator4} with $s=\si-2$, $\rho=0$ and $P=I$, $\mathscr{D}=\bdy_{l}$, we obtain
\begin{align*}
    |J^b_3|&\le Cc_{j}(\Sob{\mathcal{F}(\Lam\bdy_{\ell}^{\perp}A q)}{L^1}+\Sob{\bdy_{\ell}^{\perp}A q}{\Hdot^2})\Sob{\bdy_{\ell}\bdy_{l}\tht}{\Hdot^{\si-2}}\Sob{\Lam^{\si}_{j}\tht}{L^2}\\
    &\le Cc_{j}\Sob{q}{H^{\be+1}}\Sob{\tht}{\Hdot^{\si}}\Sob{\Lam^{\si}_{j}\tht}{L^2}.
\end{align*}
Applying \cref{lem:commutator4} with $s=\si-2$, $\rho=0$, $\nu=2-\be$ and $P=I$, $\mathscr{D}=\bdy_{l}$, we obtain
\begin{align*}
    |J^a_5|&\le Cc_{j}\Sob{\bdy_{\ell}^{\perp}\bdy_{l}q}{\Hdot^{\be-1}}\Sob{\bdy_{\ell}\tht}{\Hdot^{\si-1}}\Sob{A\Lam^{\si}_{j}\tht}{\Hdot^{2-\be}}\\
    &\le Cc_{j}\Sob{q}{\Hdot^{\be+1}}\Sob{\tht}{\Hdot^{\si}}\Sob{\Lam^{\si}_{j}\tht}{L^2}.
\end{align*}
Applying \cref{lem:commutator4} with $s=\si-2$, $\rho=\frac{\be-2}{2}$, $\nu=2-\be$ and $P=p^{\frac{1}{2}}$, $\mathscr{D}=\bdy_{l}$, we obtain
\begin{align*}
    |J^b_5|&\le Cc_{j}\Sob{\bdy_{\ell}^{\perp}q}{\Hdot^{\be}}\Sob{\bdy_{\ell}\bdy_{l}\tht}{\Hdot^{\si-2}}\Sob{A^{\frac{1}{2}}\Lam^{\si}_{j}\tht}{\Hdot^{\frac{2-\be}{2}}}\\
    &\le Cc_{j}\Sob{q}{\Hdot^{\be+1}}\Sob{\tht}{\Hdot^{\si}}\Sob{\Lam^{\si}_{j}\tht}{L^2}.
\end{align*}

\subsubsection*{Summary of estimates}
Upon returning to \eqref{eq:balance:basic}, collecting the above estimates for $I_1, I_3$ and $J_1, J_3, J_5$, then applying the Cauchy-Schwarz inequality, we obtain
    \[  
        \frac{d}{dt}\Sob{\Lam^{\si}_{j}\tht}{L^2}^2\le Cc_{j}\Sob{q}{H^{\be+1}}\Sob{\tht}{\Hdot^{\si}}\Sob{\Lam^{\si}_{j}\tht}{L^2}+C\Sob{\Lam^{\si}_{j}G}{L^2}\Sob{\Lam^{\si}_{j}\tht}{L^2}.
    \]
Finally summing over $j$ and using the Cauchy-Schwarz inequality, we have
\begin{align}\label{est:apriori:sigmadot}
     \frac{d}{dt}\Sob{\tht}{\Hdot^{\si}}^2\le C\Sob{q}{H^{\be+1}}\Sob{\tht}{\Hdot^{\si}}^{2}+C\Sob{G}{\Hdot^{\si}}\Sob{\tht}{\Hdot^{\si}}.
\end{align}
Combining with the estimates \eqref{est:apriori:L2} and \eqref{est:apriori:sigmadot}, we deduce
\begin{align}\label{est:apriori:sigma}
    \frac{d}{dt}\Sob{\tht}{H^{\si}}^2\le C\Sob{q}{H^{\be+1}}\Sob{\tht}{H^{\si}}^{2}+C\Sob{G}{H^{\si}}\Sob{\tht}{H^{\si}}.
\end{align}
An application of Gronwall's inequality then yields
\begin{align}\label{est:Gronwall}
    \Sob{\tht(t)}{H^{\si}}\le e^{C\Sob{q}{L^{1}_{t}H^{\be+1}}}(\Sob{\tht_{0}}{H^{\si}}+\Sob{G}{L^{1}_{t}H^{\si}}).
\end{align}

From the estimates developed above, one may carry out an artificial viscosity argument to establish the corresponding well-posedness result for \eqref{E:mod:claw}. We state this below as \cref{T:modclaw:wellpsdn}. Details of the artificial viscosity argument are provided in \cref{sect:app}.

\begin{Thm}{\label{T:modclaw:wellpsdn}}
Let $\be\in(1,2)$. Given $T>0$, suppose $q\in L^{1}(0,T;{H}^{\be+1})\cap L^{p}(0,T; H^{-m})$ for some $m>0$ and $p>1$, and $G \in L^{1}(0,T;H^{\si})$ for some  $\si \in [1,\be+1]$. For each $\tht_0\in H^{\si}$, there exists a unique solution $\theta\in C([0,T];H^{\si})$ of \eqref{E:mod:claw} satisfying \eqref{est:Gronwall}.
\end{Thm}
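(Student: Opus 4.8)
The plan is to establish \cref{T:modclaw:wellpsdn} via an \emph{artificial viscosity (vanishing viscosity) scheme} combined with the apriori estimates already derived, namely the Gronwall bound \eqref{est:Gronwall}. The overall strategy follows the classical paradigm for quasilinear conservation laws: regularize the equation by adding a dissipative term, obtain existence for the regularized problem by standard parabolic theory, derive uniform-in-viscosity bounds from the apriori estimates, extract a weak limit, and finally recover strong continuity in time. The key point is that the flux $F_q$ is \emph{linear} in $\tht$ for fixed $q$, so \eqref{E:mod:claw} is genuinely a linear (nonautonomous) equation; the only nonlinearity-type difficulty is the loss-of-derivatives structure of the operator $A = \Lam^{\be-2}p(\Lam)$, which is precisely what the commutator estimates of \cref{lem:commutator2}, \cref{lem:commutator3}, and \cref{lem:commutator4} were built to control.

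First I would introduce the regularized problem, for $\nu > 0$,
\begin{align*}
    \partial_t \tht^\nu + \Div F_q(\tht^\nu) = \nu \Delta \tht^\nu + G, \qquad \tht^\nu(0) = \tht_0,
\end{align*}
and establish existence and uniqueness of $\tht^\nu \in C([0,T];H^\si) \cap L^2(0,T;H^{\si+1})$ by a standard fixed-point or Galerkin argument, using that the added Laplacian provides the smoothing needed to absorb the derivative loss in $F_q$ for each fixed $\nu$. Second, I would rerun the apriori computation of \cref{sect:mod:flux} on $\tht^\nu$: the viscous term contributes $-\nu\Sob{\Lam^\si_j \nabla \tht^\nu}{L^2}^2 \le 0$ to \eqref{eq:balance:basic} and so only helps, whence the \emph{same} differential inequality \eqref{est:apriori:sigma} holds uniformly in $\nu$, yielding the $\nu$-independent bound \eqref{est:Gronwall} on $\Sob{\tht^\nu(t)}{H^\si}$. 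This gives a uniform bound in $L^\infty(0,T;H^\si)$, and inspecting \eqref{E:mod:claw} together with the commutator estimates gives a uniform bound on $\partial_t\tht^\nu$ in some negative-order space $H^{-m'}$; here the hypothesis $q\in L^p(0,T;H^{-m})$ enters to control the low-frequency/flux contribution and guarantee that $\Div F_q(\tht^\nu)$ lands in a fixed space.

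Third, by Banach--Alaoglu I would extract a weak-$*$ limit $\tht^\nu \rightharpoonup \tht$ in $L^\infty(0,T;H^\si)$, and use an Aubin--Lions-type compactness argument (uniform $H^\si$ bound plus uniform bound on $\partial_t\tht^\nu$ in a larger space) to pass to the limit in the weak formulation of the regularized equation, so that $\tht$ solves \eqref{E:mod:claw}; the viscous term $\nu\Delta\tht^\nu \to 0$ in distribution. The limit $\tht$ inherits the bound \eqref{est:Gronwall} by lower semicontinuity of the norm. For uniqueness, since the equation is linear in $\tht$ for fixed $q$, the difference $w$ of two solutions solves \eqref{E:mod:claw} with $G=0$ and $w(0)=0$; the $L^2$ estimate \eqref{est:apriori:L2}, controlling $\frac{d}{dt}\Sob{w}{L^2}^2$ by $C\Sob{q}{H^{\be+1}}\Sob{w}{\Hdot^\eps}\Sob{w}{L^2}$ with $\eps$ small and interpolated against the uniform $H^\si$ bound, forces $w\equiv 0$ by Gronwall.

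Finally, I would upgrade weak to strong continuity in time, $\tht\in C([0,T];H^\si)$: the apriori estimate shows $t\mapsto \Sob{\tht(t)}{H^\si}$ is bounded and, via \eqref{est:apriori:sigma}, cannot jump, giving both weak continuity and continuity of the norm, which together yield norm-continuity in the Hilbert space $H^\si$. The main obstacle I anticipate is \emph{not} the existence/compactness machinery, which is routine once the uniform bounds are in hand, but rather verifying that the flux term $\Div F_q(\tht^\nu)$ is controlled in a fixed topology uniformly in $\nu$ using only $q\in L^1_tH^{\be+1}\cap L^p_tH^{-m}$: the operator $A$ loses $2-\be$ derivatives, and one must check that the commutator structure in \eqref{E:skew-adjoint} (and its higher-order analogues in \cref{sect:mod:flux}) is what makes the time-derivative bound and the limit passage consistent, rather than a naive product estimate which would demand control of $\Sob{\nabla v}{H^\be}$ that is unavailable at the borderline regularity. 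Since the full apriori analysis has already been carried out in \cref{sect:mod:flux} and the technical regularization details are deferred to \cref{sect:app}, the proof here amounts to assembling these ingredients in the standard order.
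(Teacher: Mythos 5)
Your existence argument follows the paper's proof in all essentials: artificial viscosity, the uniform bound \eqref{est:Gronwall} inherited from the estimates of \cref{sect:mod:flux} (the viscous term being dissipative), a time-derivative bound in a negative Sobolev space using the hypothesis $q\in L^p(0,T;H^{-m})$, and Aubin--Lions compactness to pass to the limit. Two technical devices in the paper that you omit are both forced by the fact that $q$ and $G$ are merely $L^1$ in time: the paper first mollifies $q$ and $G$ in time, because its Picard/mild-solution construction for the viscous problem estimates the Duhamel term by $\Sob{q^n}{L^\infty_T H^{\be}}$, which is unavailable for $q\in L^1(0,T;H^{\be+1})$ (your alternative Galerkin route would tolerate $L^1$-in-time coefficients via Carath\'eodory theory, so this is a choice, not a flaw); and the compactness argument is applied not to $\tht^n$ but to $\bar{\tht}^n(t)=\tht^n(t)-\int_0^t G^n(s)\,ds$, since otherwise $\partial_t\tht^n$ is only $L^1$ in time through $G$, which is insufficient for compactness in $C([0,T];\cdot)$ --- this subtraction is where the hypothesis on $q$ in $L^p_tH^{-m}$, $p>1$, is actually consumed.

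The genuine gap is your uniqueness argument. Writing $w$ for the difference of two solutions and $y=\Sob{w}{L^2}^2$, the estimate \eqref{est:apriori:L2} with $G=0$, after interpolating $\Sob{w}{\Hdot^\eps}\le \Sob{w}{L^2}^{1-\eps/\si}\Sob{w}{\Hdot^\si}^{\eps/\si}$ against the uniform $H^\si$ bound, gives only
\begin{align*}
  y'(t)\le K(t)\, y(t)^{1-\frac{\eps}{2\si}},\qquad K\in L^1(0,T),
\end{align*}
which is \emph{not} a Gronwall (nor even Osgood) inequality: with $y(0)=0$ it admits nonzero solutions, exactly as $y'=\sqrt{y}$ admits $y=t^2/4$, so it does not force $w\equiv0$. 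Nor can you take $\eps=0$, since the constant in \cref{lem:commutator3} degenerates as $\eps\to0$ (the $L^1$ bound on $|\eta|^{1-s-\eps}\hat{g}$ costs a factor on the order of $\eps^{-1/2}$). The repair is simpler than your detour: since $F_q(\cdot)$ is linear, $w$ solves \eqref{E:mod:claw} with $G=0$ and $w(0)=0$, and because both solutions lie in $C([0,T];H^{\si})$ you may apply the $H^{\si}$-level estimate \eqref{est:apriori:sigma}, which is \emph{linear} in $\Sob{w}{H^{\si}}^2$; Gronwall (equivalently, \eqref{est:Gronwall} with zero data and zero forcing) then yields $w\equiv0$ at once. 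The moral is that uniqueness for this borderline problem cannot be closed at the $L^2$ level, precisely because the flux loses derivatives there; it must be run in the norm where the commutator estimates render the inequality linear.
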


We now establish a ``stability in parameters" type result for \eqref{E:mod:claw}. To prove this, we will require an estimate in $H^{\be}$ for the divergence-term in \eqref{E:mod:claw}; this estimate will also be invoked to establish the well-posedness of \eqref{E:log-beta}.

\begin{Lem}\label{lem:nonlinearest}
        Let $\beta \in (1,2)$. Let $q \in H^{\be}$ and $\theta \in H^{\be+1}$. Let $F_q(\tht)$ be defined as in \eqref{def:mod:flux}. Then
        \begin{align*}
            \Sob{\Div F_{q}(\tht)}{H^{\be}}\le C\Sob{q}{H^{\be}}\Sob{\tht}{H^{\be+1}}.
        \end{align*}
\end{Lem}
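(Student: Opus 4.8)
The plan is to reduce $\Div F_q(\tht)$ to a single commutator of exactly the form treated in \cref{lem:commutator2} and \cref{lem:commutator3}, and then to estimate it by duality together with a Littlewood--Paley decomposition. Writing $A=\Lam^{\be-2}p(\Lam)$ and using the repeated-index convention, I would first expand \eqref{def:mod:flux} as $\Div F_q(\tht)=(\bdy^\perp_m Aq)\bdy_m\tht+A\big((\bdy^\perp_m\tht)\bdy_m q\big)$, the two terms in which $\Div$ lands on a derivative dropping out because $\bdy_m\bdy^\perp_m=0$. Since $A$ commutes with $\bdy^\perp_m$ and the pointwise antisymmetry $(\bdy^\perp_m\tht)\bdy_m q=-(\bdy^\perp_m q)\bdy_m\tht$ holds, this becomes $(A\bdy^\perp_m q)\bdy_m\tht-A\big((\bdy^\perp_m q)\bdy_m\tht\big)$, and recognizing this difference as a commutator (again using $\bdy^\perp_m\bdy_m=0$) yields the clean identity
\[
\Div F_q(\tht)=-\big[\Lam^{-(2-\be)}p(\Lam)\bdy^\perp_m,\ \bdy_m\tht\big]q,
\]
which is precisely the structure $[\Lam^{-s}P(D)\bdy_\ell,g]f$ with $s=2-\be\in(0,1)$, $P=p$ (which satisfies \eqref{E:p:conditions} by \eqref{def:p}), $g=\bdy_m\tht$, and $f=q$.

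For the homogeneous piece $\Sob{\Div F_q(\tht)}{\Hdot^\be}$, I would argue by duality: using $\Sob{\Div F_q(\tht)}{\Hdot^\be}^2\approx\sum_j 2^{2\be j}\Sob{\De_j\Div F_q(\tht)}{L^2}^2$, it suffices to bound $\langle\Div F_q(\tht),h\rangle$ for $h$ with $\supp\hat h\subset\Acal_j$ and $\Sob{h}{L^2}\le1$, recover the weight $2^{\be j}$, and sum in $\ell^2(\ZZ)$. Decomposing $q=\sum_i\De_i q$, the commutator structure confines the principal contribution to the near-diagonal band $|i-j|\le k$, to which \cref{lem:commutator2} applies with $s=2-\be$ and $\eps=0$ (admissible since $\eps+s=2-\be\le1$), giving $|\langle[\Lam^{-s}p\,\bdy^\perp_m,\bdy_m\tht]\De_i q,h\rangle|\le C\Sob{\bdy_m\tht}{\Hdot^\be}\Sob{\De_i q}{L^2}\Sob{h}{L^2}$. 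Summing over the finitely many near-diagonal $i$, inserting $2^{\be j}\approx 2^{\be i}$, and applying Cauchy--Schwarz collapses the $\ell^2$-sum to $\Sob{\tht}{\Hdot^{\be+1}}\Sob{q}{\Hdot^\be}$; the remaining off-diagonal interactions, in which $\bdy_m\tht$ necessarily carries the top frequency, are controlled by the non-localized bound \cref{lem:commutator3}, the order-$(2-\be)$ smoothing in $A$ furnishing a geometrically summable factor across the frequency gap. For the low-frequency piece $\Sob{\Div F_q(\tht)}{L^2}$ I would estimate the two summands of $\Div F_q(\tht)$ directly, bounding $(A\bdy^\perp_m q)\bdy_m\tht$ by $\Sob{A\nabla^\perp q}{L^2}\Sob{\nabla\tht}{L^\infty}\lesssim\Sob{q}{\Hdot^{\be-1}}\Sob{\tht}{H^{\be+1}}$ and the $A(\cdot)$-term via the $(2-\be)$-order smoothing together with the embedding $\Hdot^{\be-1}\hookrightarrow L^{2/(3-\be)}$; both close with room to spare.

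The main obstacle is the borderline cancellation encoded in the identity above. Since $A\nabla^\perp=\nabla^\perp\Lam^{\be-2}p(\Lam)$ is only logarithmically better than order one, neither summand $(\nabla^\perp Aq)\bdy_m\tht$ nor $A\big((\bdy^\perp_m\tht)\bdy_m q\big)$ individually lies in $\Hdot^\be$ when $\be>1$---each controls only an $\Hdot^1$-type norm---and it is exactly their difference, the commutator, that recovers the $\be$ derivatives; capturing this requires the frequency-localized \cref{lem:commutator2} rather than a crude ``commutator gains one derivative'' bound, which would only deliver $\Hdot^2$. Exploiting this cancellation is the entire purpose of the modified flux \eqref{def:mod:flux}. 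The secondary technical point is that $p$ must satisfy \eqref{def:p} on two counts: boundedness of $p$ and of $r|p'|/\sqrt{p}$, so that $P=p$ meets \eqref{E:p:conditions} required by \cref{lem:commutator2} and \cref{lem:commutator3}, and the integrability $\int_1^\infty p^2(r)/r\,dr<\infty$, which is what renders the associated Fourier-side $L^1$ and $L^\infty$ quantities finite.
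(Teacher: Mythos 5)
Your algebraic reduction is correct and is in fact the same identity the paper exploits: with $A=\Lam^{\be-2}p(\Lam)$, one has $\Div F_q(\tht)=-[A\bdy^\perp_m,\bdy_m\tht]q$ (summed over $m$), and your near-diagonal estimate via \cref{lem:commutator2} with $s=2-\be$, $\eps=0$ coincides with the paper's term $I$, where $q$ is localized by $\Lam^{\be}\De_j$ and tested against $\Lam^{\be}\De_j\Div F_q(\tht)$. Your direct $L^2$ bound is also fine in substance (though the stated embedding should read $\Hdot^{\be-1}(\RR^2)\hookrightarrow L^{2/(2-\be)}$, paired with $\Lam^{\be-2}\colon L^{2/(3-\be)}\to L^2$; the space $\Hdot^{\be-1}$ does not embed into $L^{2/(3-\be)}$).

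The genuine gap is your treatment of the off-diagonal interactions $|i-j|>k$. \cref{lem:commutator3} does not furnish any ``geometrically summable factor across the frequency gap'': its first bound requires $\Sob{\bdy_m\tht}{H^{\be+\eps}}$, i.e. $\tht\in H^{\be+1+\eps}$, which is unavailable, and its second bound is $\Sob{\bdy_m\tht}{H^{\be}}\big(\Sob{\De_iq}{\Hdot^\eps}\Sob{h}{L^2}+\Sob{h}{\Hdot^\eps}\Sob{\De_iq}{L^2}\big)$, which after inserting the weight $2^{\be j}$ and writing $\Sob{\De_iq}{L^2}=2^{-\be i}a_i$ gives terms of size $2^{\be(j-i)}\big(2^{\eps i}+2^{\eps j}\big)a_i$: for $i<j-k$ this grows geometrically in the gap, and for $i>j+k$ the residual prefactor $2^{\eps j}$ still destroys $\ell^2_j$-summability (data concentrated at one high frequency $i_0$ produce an $\ell^2_j$ mass of order $2^{\eps i_0}a_{i_0}$). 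The structural reason no black-box use of \cref{lem:commutator3} can work is that it estimates $g=\bdy_m\tht$ by a global norm, whereas the off-diagonal gain must come from the localization of all three factors: when $2^i\gg 2^j$ it comes from the $H^{\be+1}$ decay of the $\tht$-piece forced to frequency $\sim 2^i$, and when $2^i\ll 2^j$ even that does not suffice for the term $A\bdy^\perp_m\big((\bdy_m\tht)\De_iq\big)$, whose symbol has size $2^{(\be-1)j}p(2^j)$ and yields a growing factor $2^{(\be-1)(j-i)}$ unless one also uses the cancellation $\xi^\perp\cdot\xi=0$ coming from the summation over $m$ (the divergence structure) --- a cancellation your argument never invokes. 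This is precisely why the paper does not decompose $q$; instead it writes $\Lam^\be\De_j\Div F_q(\tht)$ as the near-diagonal commutator $I$ plus two commutators with the localization operator itself, $[\Lam^\be\De_j,\bdy_\ell\tht]\bdy^\perp_\ell Aq$ and $[\Lam^\be\De_j,\bdy_\ell\tht]\bdy^\perp_\ell q$ (the latter tested against $A\Lam^\be\De_j\Div F_q(\tht)$), and estimates these by \cref{lem:commutator4} together with the Fourier-side $L^1$ bound \eqref{est:L1:estimate}, which is where the hypothesis $\int_1^\infty p^2(r)r^{-1}\,dr<\infty$ enters. Your proposal never uses \cref{lem:commutator4}, and without it (or an equivalent localized estimate that exploits the perp cancellation) the off-diagonal sum does not close.
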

\begin{proof}
Let ${H}:=\Div F_{q}(\tht)$. Observe that by \cref{lem:commutator3} with $s=2-\beta$ and $\eps>0$, we have
\begin{align}\label{est:H:L2}
\Sob{H}{L^2}^{2}&=\lb \nabla \cdot((\nabla^{\perp}Aq)\tht),H\rb+\lb A\nabla \cdot((\nabla^{\perp}\tht)q),H\rb\notag\\
&=-\lb[\bdy^{\perp}_{\ell}A,\bdy_{\ell}\tht]q ,H\rb\notag\\
&\le C\Sob{\tht}{H^{\be+1}}(\Sob{q}{\Hdot^\eps}\Sob{H}{L^2}+\Sob{q}{L^2}\Sob{H}{\Hdot^{\eps}}).
\end{align}

On the other hand, one can verify that
\begin{align*}
    \Sob{\Lam^\be_j H}{L^2}^2
    &=\lb \Lam^{\be}_j(\nabla^{\perp}A q\cdot\nabla \tht),\Lam^{\be}_{j}{H}\rb-\lb \Lam^{\be}_{j}A(\nabla^{\perp}q\cdot\nabla \tht),\Lam^{\be}_{j}{H}\rb\\
    &= -\underbrace{\lb[\bdy^{\perp}_{\ell}A,\bdy_{\ell}\tht]\Lam^{\beta}_{j}q ,\Lam^{\beta}_{j}H\rb}_{I}+\underbrace{\lb[\Lam^{\beta}_{j}, \bdy_{\ell}\tht]\bdy^{\perp}_{\ell}A q ,\Lam^{\beta}_{j}H\rb}_{II}-\underbrace{\lb[\Lam^{\be}_{j},\bdy_{\ell}\tht]\bdy^{\perp}_{\ell}q ,A\Lam^{\beta}_{j} H\rb}_{III}.
\end{align*}
Applying \cref{lem:commutator2} with $s=2-\beta$ and $\eps=0$, we obtain
\begin{align*}
    |I|&\le C\Sob{\bdy_{\ell}\theta}{\Hdot^{\be}}\Sob{\Lam^{\be}_{j}q}{L^2}\Sob{\Lam^{\be}_{j}H}{L^2}\\
    &\le Cc_{j}\Sob{q}{\Hdot^{\be}}\Sob{\tht}{\Hdot^{\be+1}}\Sob{\Lam^{\be}_{j}H}{L^2},
\end{align*}
where
\[c_{j}=\frac{\Sob{\Lam^{\be}_{j}q}{L^2}}{\Sob{q}{\Hdot^{\be}}}\in \ell^{2}(\mathbb{Z}).\]
Applying \cref{lem:commutator4} with $s=\beta-1$, $\rho=0$ and $P=I$, $\mathscr{D}=\Lam$, and proceeding just like in (\ref{est:L1:estimate}), we obtain
\begin{align*}
    |II|&\le Cc_{j}\Sob{\bdy_{\ell}\tht}{\Hdot^{\be}}(\Sob{\mathcal{F}(\bdy^{\perp}_{\ell}A q)}{L^1}+\Sob{\bdy^{\perp}_{\ell}A q}{\Hdot^1})\Sob{\Lam^{\be}_{j}H}{L^2}\\
    &\le Cc_{j}\Sob{q}{H^{\be}}\Sob{\tht}{\Hdot^{\be+1}}\Sob{\Lam^{\be}_{j}H}{L^2},
\end{align*}
Applying \cref{lem:commutator4} with $s=\beta-1$, $\rho=0$, $\nu=2-\beta$ and $P=I$, $\mathscr{D}=\Lam$, we obtain
\begin{align*}
    |III|&\le Cc_{j}\Sob{\bdy_{\ell}\tht}{\Hdot^{\be}}\Sob{\bdy^{\perp}_{\ell}q}{\Hdot^{\be-1}}\Sob{A\Lam^{\be}_{j}H}{\Hdot^{2-\beta}}\\
    &\le Cc_{j}\Sob{q}{H^{\be}}\Sob{\tht}{\Hdot^{\be+1}}\Sob{\Lam^{\be}_{j}H}{L^2},
\end{align*}
From estimates of $I$--$III$, we deduce
\begin{align}\label{est:H:beta:dot}\Sob{H}{\Hdot^{\be}}\le C\Sob{q}{H^{\be}}\Sob{\tht}{\Hdot^{\be+1}}.
\end{align}

Finally, upon combining \eqref{est:H:beta:dot} and \eqref{est:H:L2}, we conclude
\begin{align}\label{est:H:beta}
    \Sob{H}{H^{\be}}\le C\Sob{q}{H^{\be}}\Sob{\tht}{H^{\be+1}}
\end{align}
as claimed.
\end{proof}

We are now ready to prove the stability-type result for \eqref{E:mod:claw} alluded to earlier, which constitutes the main ingredient for establishing the continuity of the data-to-solution map $\Phi$ of \eqref{E:log-beta}.
\begin{Lem}\label{lem:stability}
Let $\be \in (1,2)$. Let $\{q^{n}\}$ be a sequence of functions satisfying
\begin{align*}
  \sup_{n>0}\Sob{q^{n}}{L^\infty_{T}H^{\be+1}}\le C\quad\text{and}\quad
  \lim_{n \to \infty}\Sob{q^{n}-q^{\infty}}{L^1_{T}H^{\be}}=0,
\end{align*}
for some constant $C>0$, depending on $\be, T$. Given $\tht_{0}\in H^{\be}$ and $G\in L^1(0,T;H^{\be})$, let $\tht^n$ denote the solution of
\begin{align}\label{E:modclaw:n}
\begin{cases}
 \partial_{t}\theta^{n}+ \Div F_{q^{n}}(\tht^{n})=G,\\
 \tht^{n}(0,x)=\tht_0(x).
\end{cases}
\end{align}
for each $n\in \mathbb{N}\cup \{\infty\}$. Then
\[\lim_{n \to \infty}\Sob{\tht^{n}-\tht^{\infty}}{L^{\infty}_{T}H^{\be}}=0.\]
\end{Lem}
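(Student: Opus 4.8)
The plan is to linearize the difference and recognize it as another instance of \eqref{E:mod:claw}. Set $w^{n}:=\tht^{n}-\tht^{\infty}$ and exploit that the modified flux $F_{q}(\tht)$ in \eqref{def:mod:flux} is \emph{bilinear} in the pair $(q,\tht)$. Writing
\[
F_{q^{n}}(\tht^{n})-F_{q^{\infty}}(\tht^{\infty})=F_{q^{n}}(w^{n})+F_{q^{n}-q^{\infty}}(\tht^{\infty}),
\]
and subtracting the two copies of \eqref{E:modclaw:n} (for index $n$ and for $\infty$), the difference solves
\[
\partial_{t}w^{n}+\Div F_{q^{n}}(w^{n})=-\Div F_{q^{n}-q^{\infty}}(\tht^{\infty}),\qquad w^{n}(0,x)=0,
\]
where the datum vanishes because both solutions start from the same $\tht_{0}$. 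This is exactly \eqref{E:mod:claw} with coefficient $q=q^{n}$, zero initial datum, and source $G_{n}:=-\Div F_{q^{n}-q^{\infty}}(\tht^{\infty})$, so the entire problem is reduced to quantifying how small $G_{n}$ is.

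Next I would feed this into the apriori theory already developed for \eqref{E:mod:claw}. Applying the Gronwall bound \eqref{est:Gronwall} (equivalently \cref{T:modclaw:wellpsdn}) at the level $\si=\be$ to $w^{n}$, and using $w^{n}(0)=0$, gives
\[
\Sob{w^{n}}{L^{\infty}_{T}H^{\be}}\le e^{C\Sob{q^{n}}{L^{1}_{T}H^{\be+1}}}\,\Sob{\Div F_{q^{n}-q^{\infty}}(\tht^{\infty})}{L^{1}_{T}H^{\be}}.
\]
The exponential prefactor is bounded uniformly in $n$, since $\Sob{q^{n}}{L^{1}_{T}H^{\be+1}}\le T\sup_{n}\Sob{q^{n}}{L^{\infty}_{T}H^{\be+1}}\le CT$ by hypothesis; hence it suffices to show $\Sob{G_{n}}{L^{1}_{T}H^{\be}}\goesto0$. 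For this I would invoke the nonlinear flux estimate \cref{lem:nonlinearest}, which yields the pointwise-in-time bound
\[
\Sob{\Div F_{q^{n}-q^{\infty}}(\tht^{\infty})}{H^{\be}}\le C\,\Sob{q^{n}-q^{\infty}}{H^{\be}}\,\Sob{\tht^{\infty}}{H^{\be+1}}.
\]
Integrating in time and combining the assumed convergence $\Sob{q^{n}-q^{\infty}}{L^{1}_{T}H^{\be}}\goesto0$ with the (time-uniform) control of $\tht^{\infty}$ then forces $\Sob{G_{n}}{L^{1}_{T}H^{\be}}\goesto0$, and the claim follows.

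The delicate point, and the main obstacle, is precisely the source estimate at the top level $H^{\be}$. A naive transport-equation treatment of $\Div F_{q^{n}-q^{\infty}}(\tht^{\infty})$ by product estimates only places it in $H^{\be-1}$: even for a smooth coefficient the term $(\nabla^{\perp}\Lam^{\be-2}p(\Lam)(q^{n}-q^{\infty}))\cdot\nabla\tht^{\infty}$ inherits only the regularity of $\nabla\tht^{\infty}$. It is the commutator structure engineered into the modified flux \eqref{def:mod:flux} and exploited in \cref{lem:nonlinearest} that recovers the missing derivative and controls the source in $H^{\be}$ — but at the cost of one extra derivative on the perturbed solution, i.e.\ $\tht^{\infty}\in H^{\be+1}$. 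This is exactly why the stability is naturally measured one derivative below the regularity of the underlying solutions: the uniform hypothesis is imposed at the $H^{\be+1}$ level while only $H^{\be}$ convergence of the coefficients is required, and the estimate above closes immediately whenever $\tht^{\infty}$ enjoys this $H^{\be+1}$ control (as it does for the solutions to which the lemma is applied). I expect the real technical heart to lie in this interplay between the \emph{strong} uniform bound $\sup_{n}\Sob{q^{n}}{L^{\infty}_{T}H^{\be+1}}\le C$ and the \emph{weak} convergence $\Sob{q^{n}-q^{\infty}}{L^{1}_{T}H^{\be}}\goesto0$; should one wish to avoid assuming the extra derivative on $\tht^{\infty}$ outright, the natural remedy is a Littlewood--Paley splitting $\tht^{\infty}=S_{N}\tht^{\infty}+(I-S_{N})\tht^{\infty}$, treating the low-frequency piece through \cref{lem:nonlinearest} with the Bernstein gain $\Sob{S_{N}\tht^{\infty}}{H^{\be+1}}\le C2^{N}\Sob{\tht^{\infty}}{H^{\be}}$ (vanishing as $n\to\infty$ for fixed $N$) and spending the uniform $H^{\be+1}$ bound on $q^{n}-q^{\infty}$ against the small tail $(I-S_{N})\tht^{\infty}$, which is the high-/low-frequency balance underpinning Kato's continuity scheme.
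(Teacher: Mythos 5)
Your first two paragraphs reproduce, essentially verbatim, the paper's own argument---but only under the \emph{additional} hypotheses $\tht_0\in H^{\be+1}$ and $G\in L^1(0,T;H^{\be+1})$, since only then does \cref{T:modclaw:wellpsdn} give $\tht^\infty\in L^\infty_TH^{\be+1}$, which is what \cref{lem:nonlinearest} needs for your source bound $\Sob{\Div F_{q^n-q^\infty}(\tht^\infty)}{H^\be}\le C\Sob{q^n-q^\infty}{H^\be}\Sob{\tht^\infty}{H^{\be+1}}$ to have finite right-hand side. The lemma, however, assumes only $\tht_0\in H^\be$ and $G\in L^1(0,T;H^\be)$, so $\tht^\infty$ is merely an $H^\be$-solution and your key estimate is vacuous. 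Moreover, your parenthetical dismissal of this case---that $\tht^\infty$ ``enjoys $H^{\be+1}$ control \ldots for the solutions to which the lemma is applied''---is incorrect: in the proof of continuity of the data-to-solution map (\cref{prop:continuity:flowmap}), the lemma is applied to \eqref{E:omega}, whose initial datum is $\bdy_\ell\tht_0\in H^\be$ and whose forcing $G_\ell=\Div F_{\bdy_\ell\tht}(\tht)$ lies only in $L^1_TH^\be$; the corresponding limit solution is $\om_\ell=\bdy_\ell\tht$, which is \emph{not} in $H^{\be+1}$. The rough-data case is thus exactly the case the lemma exists to cover, and it is where the real work lies.

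Your proposed remedy for the rough case does not close either. The low-frequency piece $\Div F_{q^n-q^\infty}(S_N\tht^\infty)$ is fine, but the high-frequency piece $\Div F_{q^n-q^\infty}((I-S_N)\tht^\infty)$ would require an estimate of the form $\Sob{\Div F_q(\tht)}{H^\be}\le C\Sob{q}{H^{\be+1}}\Sob{\tht}{H^\be}$, i.e., with the extra derivative transferred from $\tht$ onto $q$. No such estimate appears in the paper, and none can hold: the flux is not symmetric in $(q,\tht)$. Writing $\Div F_q(\tht)=-[\bdy^{\perp}_{\ell}A,\bdy_{\ell}\tht]q$ with $A=\Lam^{\be-2}p(\Lam)$ (as in the proof of \cref{lem:nonlinearest}), the commutator gains a derivative in the $q$-slot at the price of regularity of the coefficient $\bdy_\ell\tht$; concretely, the constituent term $\nabla^{\perp}Aq\cdot\nabla\tht$ lies only in $H^{\be-1}$ when $\tht\in H^\be$, no matter how smooth $q$ is. The paper's resolution is structurally different: it truncates the \emph{data and forcing}, not the solution sitting inside the source. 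Letting $\tht^n_k$ solve \eqref{E:mod:claw} with the \emph{same} coefficient $q^n$ but data $P_k\tht_0$ and forcing $P_kG$ (where $P_k$ cuts off to frequencies $|\xi|\le 2^k$), the difference $\tht^n-\tht^n_k$ satisfies the equation with flux coefficient $q^n$ and no coefficient-difference source at all, so \eqref{est:Gronwall} gives
\begin{align*}
\Sob{\tht^n-\tht^n_k}{L^\infty_TH^\be}\le e^{C\Sob{q^n}{L^1_TH^{\be+1}}}\left(\Sob{(I-P_k)\tht_0}{H^\be}+\Sob{(I-P_k)G}{L^1_TH^\be}\right),
\end{align*}
which is small uniformly in $n$ as $k\to\infty$; for fixed large $k$ your smooth-data argument applies to $\{\tht^n_k\}_n$, and a three-term triangle inequality finishes. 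The essential point you miss is that the approximation must be arranged so that a merely-$H^\be$ function never occupies the $\tht$-slot of $F_{q^n-q^\infty}(\cdot)$.
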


\begin{proof}
First, let us consider the case when $\tht_{0} \in H^{\be+1}$ and $G \in L^{1}(0,T;H^{\be+1})$. By \cref{T:modclaw:wellpsdn}, the sequence of solutions to \eqref{E:modclaw:n}, denoted by $\{\tht^{n}\}$, is bounded in $L^{\infty}(0,T;H^{\be+1})$, uniformly in $n$. Observe that $\tht^{n}-\tht^{\infty}$ satisfies
\begin{align}\label{E:modclaw:intermediate}
    \partial_{t}(\tht^{n}-\tht^{\infty})+ \Div F_{q^{n}}(\tht^{n}-\tht^{\infty})=\Div F_{(q^{\infty}-q^{n})}(\tht^{\infty}).
\end{align} 
By \cref{lem:nonlinearest}, we have
\begin{align*}
    \Sob{\Div F_{(q^{\infty}-q^{n})}(\tht^{\infty})}{H^{\beta}}\le C\Sob{q^{\infty}-q^{n}}{H^{\be}}\Sob{\tht^{\infty}}{H^{\be+1}}.
\end{align*}
Upon returning to (\ref{E:modclaw:intermediate}) and using \eqref{est:Gronwall}, we obtain
\[\Sob{\tht^{n}(t)-\tht^{\infty}(t)}{H^{\beta}}\le e^{C\Sob{q^{n}}{L^1_{T}H^{\be+1}}}\Sob{\tht^{\infty}}{L^{\infty}_{T}H^{\be+1}}\Sob{q^{n}-q^{\infty}}{L^1_{T}H^{\be}}.\]
This implies
\[\lim_{n\to \infty}\Sob{\tht^{n}-\tht^{\infty}}{L^{\infty}(0,T;H^{\be})}=0.\]
Now, let us assume that  $\tht_{0}\in H^{\be}$. For all $n \in \mathbb{N}\cup\{\infty\}$ and $k\in \mathbb{N}$, denote by $\tht^{n}_{k}$, the solution to
\begin{align} \label{E:modclaw:k}\partial_{t}\theta^{n}_{k}+ \Div F_{q^{n}}(\tht^{n}_{k})=P_{k}G ,\quad\tht^{n}_{k}(0,x)=P_{k}\tht_0(x),
\end{align}
where $P_{k}$ denotes projection to frequencies $|\xi|\leq 2^{k}$, i.e., $\mathcal{F}({P_{k}f})(\xi)=\mathbbm{1}_{\Bcal_{k}}(\xi)\hat{f}(\xi)$. 
From \eqref{E:modclaw:n} and \eqref{E:modclaw:k}, it follows that
    \[
    \partial_{t}(\tht^{n}-\theta^{n}_{k})+ \Div F_{q^{n}}(\tht^{n}-\tht^{n}_{k})=(I-P_{k})G.
    \]
Then by \cref{T:modclaw:wellpsdn}, we obtain
\begin{align}\label{est:modclaw:intermediate:hf}\Sob{\tht^{n}-\tht^{n}_{k}}{L^{\infty}_{T}H^{\beta}}\le e^{C\Sob{q^{n}}{L^1_{T}H^{\be+1}}}\left(\Sob{(I-P_{k})\tht_{0}}{H^{\beta}}+\Sob{(I-P_{k}G)}{L^1_{T}H^{\be}}\right),\quad \text{for all}\ n \in \mathbb{N}\cup\{\infty\}
\end{align}
Let $\delta>0$. By \eqref{est:modclaw:intermediate:hf}, we can select $k$ large enough, say $k_{0}$, such that $\Sob{\tht^{n}-\tht^{n}_{k_0}}{L^{\infty}_{T}H^{\beta}}\le \delta/3$ for all $n$. Since $P_{k_0}\tht_{0}\in H^{\be+1}$ and $ P_{k_0}G \in L^{1}(0,T;H^{\be+1})$, we can find an integer $N$ such that for all $n\ge N$
    \[
        \Sob{\tht^{n}_{k_0}-\tht^{\infty}_{k_0}}{L^{\infty}_{T}H^{\beta}}\le \delta/3.
    \]
By the triangle inequality, we obtain
    \[
        \Sob{\tht^{n}-\tht^{\infty}}{L^{\infty}_{T}H^{\be}}\le \delta.
    \]
Since $\delta$ was arbitrary, the proof is complete.
\end{proof}
\section{Existence, Uniqueness, and Continuity of solutions to (\ref{E:log-beta})}\label{proof:main}

We will now prove the main theorem of the paper, \cref{T:theorem1}. We will do this in three steps, starting by establishing existence, proving uniqueness, and concluding with continuity with respect to initial data, that is, continuity of the data-to-solution map.

\subsection{Existence}
Observe that since $\nabla^{\perp}\tht\cdot\nabla \tht=0$, we can express equation \eqref{E:beta} as 
\begin{align}\label{E:beta:mod:form}
    \partial_{t}\theta+ \Div F_{-\tht}(\tht)=0 ,\quad\tht(0,x)=\tht_0(x),
\end{align}
where $F$ is as defined in \eqref{def:mod:flux}. By invoking \eqref{est:apriori:sigma} for $\si=\be+1$, we obtain
\begin{align*}
    \frac{d}{dt}\Sob{\tht}{H^{\be+1}}^2\le C\Sob{\tht}{H^{\be+1}}^{3}.
\end{align*}
We conclude that there exists a time $T=T(\Sob{\tht_0}{H^{\be+1}})$ such that $\tht(t,x)$ satisfies
\begin{align}\label{ineq:Sobolev:bound}
    \Sob{\tht}{L^{\infty}_{T}H^{\be+1}}\le 2\Sob{\tht_{0}}{H^{\be+1}}.
\end{align}
The existence of a solution $\tht(t,x)$ can now be established from a standard argument via artificial viscosity similar to the proof of \cref{T:modclaw:wellpsdn}.
\subsection{Uniqueness}Let $\tht_0\in H^{\be+1}$ and suppose $\tht^{(1)},\tht^{(2)}\in C([0,T];H^{\be+1})$ are two solutions of \eqref{E:log-beta} corresponding to $\tht_0$. Let  $\bar{\tht}:=\tht^{(1)}-\tht^{(2)}$ and observe that $\bar{\tht}$ satisfies
\begin{align*}
\begin{cases}
 \partial_{t}\bar{\theta}+ \Div F_{-\tht^{(1)}}(\bar{\tht})=\Div F_{\bar{\tht}}(\tht^{(2)}),\\
 \bar{\tht}(0,x)=0.
\end{cases}
\end{align*}
By \cref{lem:nonlinearest}, we have
\[\Sob{\Div F_{\bar{\tht}}(\tht^{(2)})}{H^{\be}}\le C\Sob{\bar{\tht}}{H^{\be}}\Sob{\tht^{(2)}}{H^{\be+1}}.\]
Using \eqref{est:apriori:sigma}, we obtain
\begin{align}\label{est:stability:Hbeta}
    \frac{d}{dt}\Sob{\bar{\tht}}{H^{\be}}^2\le C\left(\Sob{\tht^{(1)}}{H^{\be+1}}+\Sob{\tht^{(2)}}{H^{\be+1}}\right)\Sob{\bar{\tht}}{H^{\be}}^{2}.
\end{align}
An application of the Gronwall inequality then establishes uniqueness.
\subsection{Continuity of the flow map} 
 Let $\Phi:H^{\be+1}\goesto\bigcup_{T>0}C([0,T];H^{\be+1})$ denote the flow map of \eqref{E:log-beta}. By the previous considerations, we have shown that $\Phi$ is well-defined. We will now show that $\Phi$ is continuous.
\begin{Prop}\label{prop:continuity:flowmap}
\emph{Let $\tht_{0} \in H^{\be+1}$. There exists a neighborhood $U_0\subset H^{\be+1}$ of $\tht_{0}$ and time $T>0$ such that for any sequence of initial data $\{\tht^{n}_{0}\}\subset U_0$}, we have
    \[
        \lim_{n \to \infty} \Sob{\tht^{n}_{0}-\tht_{0}}{H^{\be+1}}=0\quad\text{implies}\quad\lim_{n \to \infty}\Sob{\Phi(\tht_{0}^{n})-\Phi(\tht_{0})}{L^{\infty}_{T}H^{\be+1}}=0.
    \]
\end{Prop}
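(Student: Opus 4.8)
The plan is to prove continuity first in the weaker $H^{\be}$ topology and then upgrade to the borderline space $H^{\be+1}$ by means of a Kato--Bona-Smith splitting built from the linearizations of \eqref{E:mod:claw}, the stability properties of which are governed by \cref{lem:stability}. Since $\tht_0^n\to\tht_0$ in $H^{\be+1}$, the data are uniformly bounded, so \eqref{ineq:Sobolev:bound} furnishes a single time $T>0$ and a neighborhood $U_0$ of $\tht_0$ on which all $\tht^n:=\Phi(\tht_0^n)$ and $\tht:=\Phi(\tht_0)$ exist and satisfy $\sup_n\Sob{\tht^n}{L^\infty_T H^{\be+1}}\le M$.

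First I would establish $\tht^n\to\tht$ in $C([0,T];H^\be)$. Exactly as in the uniqueness argument, the bilinearity of the flux in $(q,\tht)$ shows that $\bar\tht^n:=\tht^n-\tht$ solves \eqref{E:mod:claw} with coefficient $q=-\tht^n$, forcing $G=\Div F_{\bar\tht^n}(\tht)$, and data $\tht_0^n-\tht_0$. By \cref{lem:nonlinearest}, $\Sob{\Div F_{\bar\tht^n}(\tht)}{H^\be}\le C\Sob{\bar\tht^n}{H^\be}\Sob{\tht}{H^{\be+1}}$, so the estimate \eqref{est:apriori:sigma} at $\si=\be$ together with Gronwall's inequality gives $\Sob{\bar\tht^n}{L^\infty_T H^\be}\le e^{CMT}\Sob{\tht_0^n-\tht_0}{H^\be}\to0$. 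In particular the coefficients $-\tht^n$ converge to $-\tht$ in $L^1_T H^\be$, which is precisely the hypothesis under which \cref{lem:stability} controls the linearizations appearing below.

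To upgrade, I would fix $\delta>0$ and mollify the data, setting $\tht^{n,N}:=\Phi(S_N\tht_0^n)$ and $\tht^{\infty,N}:=\Phi(S_N\tht_0)$. Because $S_N\tht_0^n$ is band-limited, higher-order analogues of \eqref{est:apriori:sigma}---available since the advecting coefficient is now smooth---propagate the extra regularity, yielding $\Sob{\tht^{n,N}}{L^\infty_T H^{\be+2}}\le C2^N$ uniformly in $n$. I then split
\[
\Sob{\tht^n-\tht}{L^\infty_T H^{\be+1}}\le \Sob{\tht^n-\tht^{n,N}}{L^\infty_T H^{\be+1}}+\Sob{\tht^{n,N}-\tht^{\infty,N}}{L^\infty_T H^{\be+1}}+\Sob{\tht^{\infty,N}-\tht}{L^\infty_T H^{\be+1}}.
\]
The middle term is a continuity-for-smooth-data statement: since both solutions lie in $H^{\be+2}$, a one-derivative-shifted version of \cref{lem:nonlinearest} bounds the corresponding forcing in $H^{\be+1}$ by $C_N\Sob{\tht^{n,N}-\tht^{\infty,N}}{H^{\be+1}}$, so \eqref{est:apriori:sigma} at $\si=\be+1$ closes with room to spare and the term tends to $0$ as $n\to\infty$ for each fixed $N$.

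The crux, and the main obstacle, is the uniform top-order control of the remainder $\ze:=\tht^n-\tht^{n,N}$, which must satisfy $\Sob{\ze}{L^\infty_T H^{\be+1}}\to0$ as $N\to\infty$ \emph{uniformly in $n$}. Its $H^\be$ size is harmless: the data $(I-S_N)\tht_0^n$ is $O(2^{-N})$ in $H^\be$, so \cref{lem:nonlinearest} and Gronwall give $\Sob{\ze}{L^\infty_T H^\be}\le C2^{-N}$. The difficulty is that $\ze$ solves \eqref{E:mod:claw} with coefficient $-\tht^n$ and forcing $\Div F_{\ze}(\tht^{n,N})$, and closing the $H^{\be+1}$ estimate naively costs a full derivative of $\tht^{n,N}$, i.e. a factor $\Sob{\tht^{n,N}}{H^{\be+2}}\le C2^N$, producing a fatal $e^{C2^N T}$ in Gronwall. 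The resolution---and the reason the commutator structure of the modified flux \eqref{def:mod:flux} is indispensable---is a tame estimate that places the low regularity on the small factor $\ze$ and the derivative loss on the smooth factor, so that the dangerous contribution is proportional to $\Sob{\ze}{H^\be}\Sob{\tht^{n,N}}{H^{\be+2}}$, in which the factors $2^{-N}$ and $2^{N}$ cancel, leaving a high-frequency tail of the data that is uniformly small in $n$ by precompactness of the convergent data sequence. A direct transport estimate would instead require control of $\Sob{\nabla u}{H^\be}$ and cannot separate the regularities in this way, which is exactly why the approach valid for $\be\in[0,1]$ fails here. Collecting the three pieces and sending first $N\to\infty$ and then $n\to\infty$ yields the claim.
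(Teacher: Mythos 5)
Your route is genuinely different from the paper's. The paper never mollifies the data: it differentiates the equation, so that $\bdy_\ell\tht^n$ solves \eqref{E:mod:claw} with coefficient $-\tht^n$ and forcing $G^n_\ell=\Div F_{\bdy_\ell\tht^n}(\tht^n)$, and then performs a Kato splitting $\bdy_\ell\tht^n=\om^n_\ell+\ze^n_\ell$, where $\om^n_\ell$ carries the \emph{limit} data and forcing with varying coefficients (this is where the stability-in-parameters result, \cref{lem:stability}, is the engine of the proof) and $\ze^n_\ell$ carries the small differences in data and forcing (handled by \cref{lem:nonlinearest} and Gronwall). Consequently the paper works entirely at regularity at most $\be+1$ and needs no higher-order propagation. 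You instead propose the Bona--Smith scheme: mollify the data, propagate one extra derivative, and trade $2^{-N}$ against $2^{N}$ at top order. Note that your appeal to \cref{lem:stability} is vestigial: nothing in your three-term splitting ever invokes it; what your scheme actually requires is tame estimates one derivative above where the paper works.

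That is also where the gap sits, at the step you yourself call the crux. You assert, but do not prove, the two tame estimates on which everything rests: (i) propagation of $H^{\be+2}$ regularity with growth controlled only by $\Sob{\tht^{n,N}}{L^{\infty}_{T}H^{\be+1}}$, so that the bound $C2^{N}$ survives on the fixed interval $[0,T]$; and (ii) the shifted flux bound $\Sob{\Div F_q(\tht)}{H^{\be+1}}\le C\bigl(\Sob{q}{H^{\be+1}}\Sob{\tht}{H^{\be+1}}+\Sob{q}{H^{\be}}\Sob{\tht}{H^{\be+2}}\bigr)$. Neither follows by citing the paper's lemmas ``one derivative higher'': the commutator lemmas carry parameter constraints ($s\in[0,1)$, $\eps\in[0,1)$, $\eps+s\le1$, $\nu\in(0,1)$) tuned to $\si\le\be+1$, and they break at $\si=\be+2$ --- for instance, the choice $\eps=\be+1-\si$ in the proof of \eqref{est:apriori:sigmadot} becomes negative, and \cref{lem:commutator4} cannot produce the operator $\Lam^{\be+1}\lpj$ while keeping $\rho=0$ and $s<1$. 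The estimates are nevertheless true, and the cleanest repair uses bilinearity of the flux, $\bdy_m F_q(\tht)=F_{\bdy_m q}(\tht)+F_q(\bdy_m\tht)$, whence
\[
\Sob{\Div F_q(\tht)}{H^{\be+1}}\le C\Sob{\Div F_q(\tht)}{H^{\be}}+C\sum_{m=1,2}\left(\Sob{\Div F_{\bdy_m q}(\tht)}{H^{\be}}+\Sob{\Div F_{q}(\bdy_m\tht)}{H^{\be}}\right),
\]
and each right-hand term is controlled by \cref{lem:nonlinearest}; this gives (ii), and applying \eqref{est:apriori:sigma} at $\si=\be+1$ to the equation satisfied by $\nabla\tht^{n,N}$ (coefficient $-\tht^{n,N}$, forcing $\Div F_{\nabla\tht^{n,N}}(\tht^{n,N})$) together with (ii) gives (i). Observe that this repair is precisely the paper's own maneuver --- differentiate the equation and estimate at levels $H^{\be}$, $H^{\be+1}$ --- so once it is carried out, your mollification layer is doing work that the paper's splitting accomplishes without ever leaving $H^{\be+1}$. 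With (i) and (ii) supplied, the rest of your bookkeeping (the $2^{-N}\cdot 2^{N}$ cancellation, uniform smallness of the high-frequency tails via precompactness of the convergent data sequence, and the order of limits) is sound.
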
 
\begin{proof}
Let $\tht=\Phi(\tht_0)$ and $T_0>0$ denote the local existence time of $\tht$. Define $U_0$ by
    \[
        U_0:=\{\tht'_{0}: \quad \Sob{\tht'_{0}-\tht_{0}}{H^{\be+1}}<\Sob{\tht_{0}}{H^{\be+1}}\}.
    \]
Let $K:=4\Sob{\tht_{0}}{H^{\be+1}}$. Denote by $\tht^{n}$ the solution to \eqref{E:beta:mod:form} with the initial data $\tht^n_0$. By \eqref{ineq:Sobolev:bound}, we have
    \[
        \sup_{n>0}\Sob{\tht^n}{L^{\infty}_{T}H^{\be+1}}\le K,
    \]
{for some $0<T\leq T_0$, depending on $\Sob{\tht_0}{H^{\be+1}}$}.
{Then by} the stability estimate in \eqref{est:stability:Hbeta}, we have
    \[
        \lim_{n \to \infty}\Sob{\tht^{n}-\tht}{L^\infty_{T}H^{\be}}=0.
    \]
In particular, we may set $\tht^\infty=\tht$. Observe that to complete the proof, it thus suffices to show that $\nabla \tht^n \to \nabla \tht$ in $L^\infty_{T}H^{\be}$. 

We decompose $\bdy_{\ell}\tht^{n}=\omega_{\ell}^{n}+\zeta_{\ell}^{n}$, where  ($\omega^{n}_{\ell},\zeta^{n}_{\ell}$) satisfy the following equations
\begin{align}\label{E:omega}
    \begin{cases}
    \partial_{t}\om^{n}_{\ell}+ \Div F_{-\tht^{n}}(\om^{n}_{\ell})=G_{\ell}\\
    \om^{n}_{\ell}(0,x)=\bdy_{\ell}\tht_0(x),
    \end{cases}
\end{align}
and 
\begin{align}\label{E:zeta}
   \begin{cases}
   \partial_{t}\ze^{n}_{\ell}+ \Div F_{-\tht^n}(\ze^{n}_{\ell})=G_{\ell}^{n}-G_{\ell}\\
   \ze^{n}_{\ell}(0,x)=\bdy_{\ell}\tht^{n}_0(x)-\bdy_{\ell}\tht_{0}(x),
   \end{cases} 
\end{align}
for $n\in\mathbb{N}\cup\{\infty\}$ and $\ell=1,2$, where
    \[
        G_{\ell}=\Div F_{\bdy_{\ell}\tht}(\tht), \quad G^{n}_{\ell}=\Div F_{\bdy_{\ell}\tht^n}(\tht^{n}).
    \]
Let
    \[
       \om^n:=(\om^n_1,\om^n_2),\quad\om:=(\om_{1},\om_2),\quad \ze^n:=(\ze^n_1,\ze^n_2),\quad \ze:=(\ze_1,\ze_2).
    \]
By \cref{lem:stability}, it follows that
\begin{align*}
   \lim_{n \to \infty} \Sob{\om^{n}-\om}{L^{\infty}_{T}H^{\be}}=0.
\end{align*}
We write
\begin{align*}
    G_{\ell}^{n}-G_{\ell}=\Div F_{\bdy_{\ell}\tht^{n}}(\tht^n-\tht)+\Div F_{(\bdy_{\ell}\tht^{n}-\bdy_{\ell}\tht)}(\tht).
\end{align*}
By \cref{lem:nonlinearest}, we obtain
\begin{align*}
    \Sob{G_{\ell}^{n}-G_{\ell}}{H^{\be}}&\le C\left(\Sob{\bdy_{\ell}\tht^{n}}{H^\be}\Sob{\tht^n-\tht}{H^{\be+1}}+\Sob{\bdy_{\ell}\tht^{n}-\bdy_{\ell}\tht}{H^\be}\Sob{\tht}{H^{\be+1}} \right)\\
    &\le CK\left(\Sob{\ze^{n}}{H^\be}+\Sob{\om^{n}-\om}{H^\be}\right).
\end{align*}
Using the above estimate along with \eqref{est:Gronwall}, we obtain
\begin{align*}
    \Sob{\ze^{n}(t)}{H^{\be}}\le e^{CKt}\left\{\Sob{\tht^{n}_{0}-\tht_{0}}{H^{\be+1}}+CK\int_{0}^{t}\left(\Sob{\ze^{n}(s)}{H^\be}+\Sob{\om^{n}(s)-\om(s)}{H^\be}\right)ds\right\}.
\end{align*}
Applying Gronwall's inequality, then passing to the limit as $n \to \infty$, we obtain
\[\limsup_{n \to \infty}\Sob{\ze^{n}(t)}{L^{\infty}_{T}H^{\be}}\le \lim_{n \to \infty} e^{CKT}\left(\Sob{\tht^{n}_{0}-\tht_{0}}{H^{\be+1}}+CKT\Sob{\om^{n}-\om}{L^{\infty}_{T}H^{\be}}\right)=0.\]
Finally, by the triangle inequality
    \[
       \limsup_{n\goesto\infty}\Sob{\nabla \tht^{n}-\nabla \tht}{L^{\infty}_{T}H^\be}\leq \limsup_{n\goesto\infty}\Sob{\om^n-\om}{L^\infty_TH^\be}+\limsup_{n\goesto\infty}\Sob{\ze^n-\ze}{L^\infty_TH^{\be}}=0,
    \]
as claimed.
\end{proof}

\appendix
\renewcommand{\thesection}{\null}
\renewcommand{\theequation}{A.\arabic{equation}}
\section{}\label{sect:app}
\begin{proof}[Proof of \cref{lem:elem:convex}]
First observe that by the Cauchy-Schwarz inequality
\begin{align*}
\abs{\varphi+\tau \vartheta}^{2}=\abs{\varphi}^{2}+\tau^{2}+2\tau\varphi\cdot\vartheta\ge\abs{\varphi}^{2}+\tau^{2}-2\tau\abs{\varphi}=\abs{\abs{\varphi}-\tau}^{2}.
\end{align*}
Since $s>0$, it follows that
\begin{align*}
\int_{0}^{1}\frac{1}{\abs{\varphi+\tau \vartheta}^{s}}\,d\tau \le\int_{0}^{1}\frac{1}{\abs{\abs{\varphi}-\tau}^{s}}d\tau.
\end{align*}
If $\abs{\varphi}\le 1$, then
\begin{align*}
\begin{split}
\int_{0}^{1}\frac{1}{\abs{\abs{\varphi}-\tau}^{s}}\,d\tau&=\int_{0}^{\abs{\varphi}}\frac{1}{(\abs{\varphi}-\tau)^{s}}\,d\tau+\int_{\abs{\varphi}}^{1}\frac{1}{(\tau-\abs{\varphi})^{s}}\,d\tau\\
&\le C(\abs{\varphi}^{1-s}+(1-\abs{\varphi})^{1-s})\le 2C.
\end{split}
\end{align*}
If $\abs{\varphi}>1$, then since $0<s<1$, we have
\begin{align*}
\int_{0}^{1}\frac{1}{\abs{\abs{\varphi}-\tau}^{s}}\,d\tau=C(-(\abs{\varphi}-1)^{1-s}+\abs{\varphi}^{1-s}),
\end{align*}
which is clearly bounded when $1<\abs{\varphi}\leq2$, and also when $\abs{\varphi}>2$ by the mean value theorem.
\end{proof}

We supply the details of the artificial viscosity argument used for proving \cref{T:modclaw:wellpsdn}.

\begin{proof}[Proof of \cref{T:modclaw:wellpsdn}]
First, we mollify $q$ and $G$ with respect to time by setting
\[q^{n}=\rho_{n}\star q,\quad G^{n}=\rho_{n}\star G,\]
where $\{\rho_{n}(t)\}$ is a sequence of standard mollifiers. Observe that we have
\[q^{n}\in C([0,T];H^{\be+1}),\quad G^{n}\in C([0,T];H^{\si}).\] Moreover, $\{q^{n}\}$ is uniformly bounded in $L^{1}([0,T];H^{\be+1})$ and $\{G^{n}\}$ is uniformly bounded in $L^{1}([0,T];H^{\si})$. Now, let us consider the following artificial viscosity regularization of \eqref{E:mod:claw}: 

\begin{align}\label{E:forced-transport:reg}
\begin{cases}
\partial_{t}\theta^{n}-\frac{1}{n}\De \tht^{n}+ \Div F_{q^{n}}(\tht^{n})=G^{n}.   \\
\tht^{n}(0,x)=\tht_{0}(x).
\end{cases}
\end{align}

For $0\le t\le T$, define
\begin{align*}
    &{F}_{1}(G^{n}):=\int^{t}_{0}e^{\frac{1}{n}\De(t-s)}G^{n}(s)\, ds,\\
    &{F}_{2}(\tht^{n};q^{n}):=\int^{t}_{0}e^{\frac{1}{n}\De(t-s)}\Div F_{q^n}(\tht^n)\,ds.
\end{align*}

We have
\begin{align*}
   \Sob{F_1(G^n)(t)}{H^{\si}}&\le T\Sob{G^n}{L^\infty_{T}H^{\si}}.
\end{align*}

To estimate $\Sob{{F}_{2}(\tht^n;q^n)}{H^{\si}}$, we consider the two cases $\si \in [1,2)$ and $\si \in [2,\be+1]$ separately.
\subsubsection*{\textbf{Case: ${\si}\in [1,2)$}} Using Plancherel's theorem and applying \cref{T:Sobolev}, we have
\begin{align*}
    &\Sob{F_{2}(\tht^n;q^n)(t)}{H^{\si}}\\&\le \int_{0}^{t}\left(\Sob{(I-\De)^{\si/2}e^{\frac{1}{n}\De(t-s)}(\Lam^{\be-2}p(\Lam)\bdy^{\perp}_{\ell}{q^n}\bdy_{\ell}\tht^n)}{L^2}+\Sob{(I-\De)^{\si/2}e^{\frac{1}{n}\De(t-s)}\Lam^{\be-2}p(\Lam)(\bdy^{\perp}_{\ell}{q^n}\bdy_{\ell}\tht^n)}{L^2}\right)ds \\
    &\le \int_{0}^{t}C\left\{1+\frac{n^{\frac{\si}{2}}}{(t-s)^{\frac{\si}{2}}}\right\}\left(\Sob{\Lam^{\be-2}p(\Lam)\bdy^{\perp}_{\ell}{q^n}\bdy_{\ell}\tht^n}{L^2}+\Sob{\bdy^{\perp}_{\ell}{q^n}\bdy_{\ell}\tht^n}{\Hdot^{\be-2}}\right)ds\\& \le C\left\{T+n^{\frac{\si}{2}}T^{\frac{2-\si}{2}}\right\}\left(\Sob{\Lam^{\be-2}p(\Lam)\bdy^{\perp}_{\ell}{q^n}}{L^{\infty}_{T}L^{\infty}}\Sob{\bdy_{\ell}\tht^n}{L^{\infty}_{T}L^2}+\Sob{\bdy^{\perp}_{\ell}{q^n}}{L^{\infty}_{T}\Hdot^{\be-\si}}\Sob{\bdy_{\ell}\tht^n}{L^{\infty}_{T}\Hdot^{\si-1}}\right)\\
    &\le C\left\{T+n^{\frac{\si}{2}}T^{\frac{2-\si}{2}}\right\}\Sob{q^n}{L^{\infty}_{T}H^{\be}}\Sob{\tht^n}{L^{\infty}_{T}H^{\si}}.
\end{align*}
\subsubsection*{\textbf{Case: ${\si}\in [2,\be+1]$}}  We proceed just like above and apply \cref{T:Sobolev} to obtain
\begin{align*}
    &\Sob{F_{2}(\tht^n;q^n)(t)}{\Hdot^{\si}}\\&\le Cn^{\frac{\si+1}{4}}\int_{0}^{t}\left\{\frac{1}{(t-s)^{\frac{\si+1}{4}}}\right\}\left(\Sob{\Lam^{\be-2}p(\Lam)\bdy^{\perp}_{\ell}{q^n}\bdy_{\ell}\tht^n}{\Hdot^{\frac{\si-1}{2}}}+\Sob{\bdy^{\perp}_{\ell}{q^n}\bdy_{\ell}\tht^n}{\Hdot^{\be-2+\frac{\si-1}{2}}}\right)ds\\ &\le Cn^{\frac{\si+1}{4}}T^{\frac{3-\si}{4}}\left(\Sob{\Lam^{\be-2}p(\Lam)\bdy^{\perp}_{\ell}{q^n}}{L^{\infty}_{T}\Hdot^{\frac{\si+1}{4}}}\Sob{\bdy_{\ell}\tht^n}{L^{\infty}_{T}\Hdot^{\frac{\si+1}{4}}}+\Sob{\bdy^{\perp}_{\ell}{q^n}}{L^{\infty}_{T}\Hdot^{\be-2+\frac{\si+1}{4}}}\Sob{\bdy_{\ell}\tht^n}{L^{\infty}_{T}\Hdot^{\frac{\si+1}{4}}}\right)\\
    &\le Cn^{\frac{\si+1}{4}}T^{\frac{3-\si}{4}}\Sob{q^n}{L^{\infty}_{T}H^{\be}}\Sob{\tht^n}{L^{\infty}_{T}H^{\si}}.
\end{align*}
Similarly,  we have
\begin{align*}
    \Sob{F_{2}(\tht^n;q^n)(t)}{L^2} \le CT\Sob{q^n}{L^{\infty}_{T}H^{\be}}\Sob{\tht^n}{L^{\infty}_{T}H^{\si}}.
\end{align*}

Using Picard's theorem \cite{Lemarie-Rieusset2002}, there exists a unique solution $\tht^{n}$ to (\ref{E:forced-transport:reg}) such that $\tht^{n}\in L^{\infty}_{T^{n}}H^{\si}$ for some time $T^{n}>0$. Owing to the uniform estimate in \eqref{est:Gronwall}, we can conclude that
\begin{align*}
    T^{n}=T,\quad \text{for all}\ n.
\end{align*}
Let us denote by
\[\bar{\tht}^{n}(t)=\tht^{n}(t)-\int_{0}^{t}G^{n}(s)ds.\]
Then, by \eqref{est:Gronwall}, $\Sob{\bar{\tht}^{n}}{L^{\infty}_{T}H^{\si}}$ is bounded uniformly in $n$.
Using similar methods as above, it is easy to see that $\Sob{\partial_{t}\bar{\theta}^{n}}{L^{p}_{T}H^{-m}}$ is bounded uniformly in $n$, for some sufficiently large $m>0$. By an application of the Aubin-Lions lemma \cite{ConstantinFoiasBook1988}, there exists $\bar{\tht}\in L^{\infty}([0,T];H^{\si})$ such that for any given $\varphi \in C_{c}^{\infty}([0,T] \times \mathbb{R}^2)$, there exists a subsequence of $\{\bar{\tht}^n\}$, denoted by $\{\bar{\tht}^{n_{k}}\}$ satisfying
\begin{align*}
   & \bar{\tht}^{n_{k}}\xrightharpoonup {\text{w*}}\bar{\tht}\quad\text{in}\quad L^{\infty}([0,T];H^{\si}),\\
    & \varphi\bar{\tht}^{n_{k}} \longrightarrow \varphi\bar{\tht}\quad \text{in} \quad C([0,T];H^{\si-\eps}),
\end{align*}
for any $\eps>0$. It is then straightforward to show that $\tht(t)=\bar{\tht}(t)+\int_{0}^{t}G(s)ds$ is a weak solution of \eqref{E:mod:claw}. This completes the proof. 
\end{proof}

\begin{footnotesize}
\bibliographystyle{plain}
\bibliography{Main.bib}

\begin{thebibliography}{10}

\bibitem{BahouriCheminDanchinBook2011}
H.~Bahouri, J.-Y. Chemin, and R.~Danchin.
\newblock {\em Fourier analysis and nonlinear partial differential equations},
  volume 343 of {\em Grundlehren der Mathematischen Wissenschaften [Fundamental
  Principles of Mathematical Sciences]}.
\newblock Springer, Heidelberg, 2011.

\bibitem{BourgainLi2015}
J.~Bourgain and D.~Li.
\newblock Strong ill-posedness of the incompressible {E}uler equation in
  borderline {S}obolev spaces.
\newblock {\em Invent. Math.}, 201(1):97--157, 2015.

\bibitem{ChaeConstantinCordobaGancedoWu2012}
D.~Chae, P.~Constantin, D.~C\'{o}rdoba, F.~Gancedo, and J.~Wu.
\newblock Generalized surface quasi-geostrophic equations with singular
  velocities.
\newblock {\em Comm. Pure Appl. Math.}, 65(8):1037--1066, 2012.

\bibitem{ChaeConstantinWu2012b}
D.~Chae, P.~Constantin, and J.~Wu.
\newblock Inviscid models generalizing the two-dimensional {E}uler and the
  surface quasi-geostrophic equations.
\newblock {\em Arch. Ration. Mech. Anal.}, 202(1):35--62, 2011.

\bibitem{ChaeWu2012}
D.~Chae and J.~Wu.
\newblock Logarithmically regularized inviscid models in borderline {S}obolev
  spaces.
\newblock {\em J. Math. Phys.}, 53(11):115601, 15, 2012.

\bibitem{Chemin1998}
J.-Y. Chemin.
\newblock {\em Perfect incompressible fluids}, volume~14 of {\em Oxford Lecture
  Series in Mathematics and its Applications}.
\newblock The Clarendon Press Oxford University Press, New York, 1998.
\newblock Translated from the 1995 French original by Isabelle Gallagher and
  Dragos Iftimie.

\bibitem{ConstantinFoiasBook1988}
P.~Constantin and C.~Foias.
\newblock {\em {N}avier-{S}tokes equations}.
\newblock Chicago Lectures in Mathematics. University of Chicago Press,
  Chicago, IL, 1988.

\bibitem{CordobaGomez-SerranoIonescu2019}
D.~C\'{o}rdoba, J.~G\'{o}mez-Serrano, and A.D. Ionescu.
\newblock Global solutions for the generalized {SQG} patch equation.
\newblock {\em Arch. Ration. Mech. Anal.}, 233(3):1211--1251, 2019.

\bibitem{ElgindiJeong2017}
T.M. Elgindi and I.~Jeong.
\newblock Ill-posedness for the incompressible {E}uler equations in critical
  {S}obolev spaces.
\newblock {\em Ann. PDE}, 3(1):Paper No. 7, 19, 2017.

\bibitem{ElgindiMasmoudi2020}
T.M. Elgindi and N.~Masmoudi.
\newblock {$L^\infty$} ill-posedness for a class of equations arising in
  hydrodynamics.
\newblock {\em Arch. Ration. Mech. Anal.}, 235(3):1979--2025, 2020.

\bibitem{FlandoliSaal2019}
F.~Flandoli and M.~Saal.
\newblock m{SQG} equations in distributional spaces and point vortex
  approximation.
\newblock {\em J. Evol. Equ.}, 19(4):1071--1090, 2019.

\bibitem{GancedoPatel2021}
F.~Gancedo and N.~Patel.
\newblock On the local existence and blow-up for generalized sqg patches.
\newblock {\em Ann. PDE}, 7(4), 2021.

\bibitem{GeldhauserRomito2019}
C.~Geldhauser and M.~Romito.
\newblock The point vortex model for the {E}uler equation.
\newblock {\em AIMS Math.}, 4(3):534--575, 2019.

\bibitem{GeldhauserRomito2020}
C.~Geldhauser and M.~Romito.
\newblock Point vortices for inviscid generalized surface quasi-geostrophic
  models.
\newblock {\em Discrete Contin. Dyn. Syst. Ser. B}, 25(7):2583--2606, 2020.

\bibitem{HimonasMisolek2010}
A.A. Himonas and G.~Misio{\l}ek.
\newblock Non-uniform dependence on initial data of solutions to the {E}uler
  equations of hydrodynamics.
\newblock {\em Comm. Math. Phys.}, 296(1):285--301, 2010.

\bibitem{HuKukavicaZiane2015}
W.~Hu, I.~Kukavica, and M.~Ziane.
\newblock Sur l'existence locale pour une \'{e}quation de scalaires actifs.
\newblock {\em C. R. Math. Acad. Sci. Paris}, 353(3):241--245, 2015.

\bibitem{JollyKumarMartinez2020a}
M.S. Jolly, A.~Kumar, and V.R. Martinez.
\newblock On the existence, uniqueness, and smoothing of solutions to the
  generalized {SQG} equations in critical {S}obolev spaces.
\newblock {\em Commun. Math. Phys.}, pages 1--46, 2021.
\newblock DOI:10.1007/s00220-021-04124-9.

\bibitem{Kato1975}
T.~Kato.
\newblock Quasi-linear equations of evolution, with applications to partial
  differential equations.
\newblock In {\em Spectral theory and differential equations ({P}roc.
  {S}ympos., {D}undee, 1974; dedicated to {K}onrad {J}\"{o}rgens)}, pages
  25--70. Lecture Notes in Math., Vol. 448, 1975.

\bibitem{KiselevRyzhikYaoZlatos2016}
A.~Kiselev, L.~Ryzhik, Y.~Yao, and A.~Zlato\v{s}.
\newblock Finite time singularity for the modified {SQG} patch equation.
\newblock {\em Ann. of Math. (2)}, 184(3):909--948, 2016.

\bibitem{KiselevYaoZlatos2017}
A.~Kiselev, Y.~Yao, and A.~Zlato\v{s}.
\newblock Local regularity for the modified {SQG} patch equation.
\newblock {\em Comm. Pure Appl. Math.}, 70(7):1253--1315, 2017.

\bibitem{KukavicaVicolWang2016}
I.~Kukavica, V.~Vicol, and F.~Wang.
\newblock On the ill-posedness of active scalar equations with odd singular
  kernels.
\newblock In {\em New trends in differential equations, control theory and
  optimization}, pages 185--200. World Sci. Publ., Hackensack, NJ, 2016.

\bibitem{KumarThesis2021}
A.~Kumar.
\newblock {\em Thesis ({P}h.{D}.)--Indiana University}.
\newblock Department of Mathematics.
\newblock (in preparation).

\bibitem{Kwon2020}
H.~Kwon.
\newblock Strong ill-posedness of logarithmically regularized 2{D} {E}uler
  equations in the borderline {S}obolev space.
\newblock {\em J. Funct. Anal.}, page 108822, 2020.

\bibitem{LazarXue2019}
O.~Lazar and L.~Xue.
\newblock Regularity results for a class of generalized surface
  quasi-geostrophic equations.
\newblock {\em J. Math. Pures Appl. (9)}, 130:200--250, 2019.

\bibitem{Lemarie-Rieusset2002}
P.~G. Lemari\'{e}-Rieusset.
\newblock {\em Recent developments in the {N}avier-{S}tokes problem}, volume
  431 of {\em Chapman \& Hall/CRC Research Notes in Mathematics}.
\newblock Chapman \& Hall/CRC, Boca Raton, FL, 2002.

\bibitem{LuoSaal2020}
D.~Luo and M.~Saal.
\newblock A scaling limit for the stochastic m{SQG} equations with
  multiplicative transport noises.
\newblock {\em Stoch. Dyn.}, 20(6):2040001, 21, 2020.

\bibitem{MisiolekYoneda2012}
G.~Misio{\l}ek and T.~Yoneda.
\newblock Ill-posedness examples for the quasi-geostrophic and the {E}uler
  equations.
\newblock In {\em Analysis, geometry and quantum field theory}, volume 584 of
  {\em Contemp. Math.}, pages 251--258. Amer. Math. Soc., Providence, RI, 2012.

\bibitem{MisiolekYoneda2018}
G.~Misio{\l}ek and T.~Yoneda.
\newblock Continuity of the solution map of the {E}uler equations in
  {H}\"{o}lder spaces and weak norm inflation in {B}esov spaces.
\newblock {\em Trans. Amer. Math. Soc.}, 370(7):4709--4730, 2018.

\bibitem{NahmodPavlovicStaffilaniTotz2018}
A.R. Nahmod, N.~Pavlovi\'{c}, G.~Staffilani, and N.~Totz.
\newblock Global flows with invariant measures for the inviscid modified {SQG}
  equations.
\newblock {\em Stoch. Partial Differ. Equ. Anal. Comput.}, 6(2):184--210, 2018.

\bibitem{Nguyen2018}
H.Q. Nguyen.
\newblock Global weak solutions for generalized {SQG} in bounded domains.
\newblock {\em Anal. PDE}, 11(4):1029--1047, 2018.

\bibitem{RunstSickel1996}
T.~Runst and W.~Sickel.
\newblock {\em Sobolev spaces of fractional order, {N}emytskij operators, and
  nonlinear partial differential equations}, volume~3 of {\em De Gruyter Series
  in Nonlinear Analysis and Applications}.
\newblock Walter de Gruyter \& Co., Berlin, 1996.

\bibitem{Vishik1998}
M.~Vishik.
\newblock Hydrodynamics in {B}esov spaces.
\newblock {\em Arch. Ration. Mech. Anal.}, 145(3):197--214, 1998.

\bibitem{YuZhenJiu2019}
H.~Yu, X.~Zheng, and Q.~Jiu.
\newblock Remarks on well-posedness of the generalized surface
  quasi-geostrophic equation.
\newblock {\em Arch. Ration. Mech. Anal.}, 232(1):265--301, 2019.

\end{thebibliography}
\end{footnotesize}

\vspace{.3in}
\begin{multicols}{2}

\noindent Michael S. Jolly\\ 
{\footnotesize
Department of Mathematics\\
Indiana University-Bloomington\\
Web: \url{https://msjolly.pages.iu.edu/}\\
 Email: \url{msjolly@indiana.edu}}\\[.2cm]

\noindent Anuj Kumar\\ 
{\footnotesize
Department of Mathematics\\
Indiana University-Bloomington\\
Web: \url{https://math.indiana.edu/about/graduate-students}\\
 Email: \url{kumar22@iu.edu}} \\[.2cm]

\columnbreak 

\noindent Vincent R. Martinez\\
{\footnotesize
Department of Mathematics and Statistics\\
CUNY-Hunter College\\
Web: \url{http://math.hunter.cuny.edu/vmartine/}\\
 Email: \url{vrmartinez@hunter.cuny.edu}}

\end{multicols}

\end{document}